\newtheorem{thm}{Theorem}
\newtheorem{cor}[thm]{Corollary}
\newtheorem{prop}[thm]{Proposition}
\newtheorem{lem}[thm]{Lemma}
\theoremstyle{definition} 
\newtheorem{rem}[thm]{Remark}
\theoremstyle{remark}
\numberwithin{thm}{section}
\numberwithin{equation}{thm}
\newcommand{\be}{\begin{equation}}
\newcommand{\ee}{\end{equation}}
\newcommand{\Mod}[1]{\ (\mathrm{mod}\ #1)}
\DeclareMathOperator{\Gal}{Gal}
\title{On the factorization of iterates of $x^d+c$ in large degree}
\author{Wade Hindes}
\email{wmh33@txstate.edu}
\address{Department of Mathematics, Texas State University, San Marcos, TX 78666 USA}
\subjclass[2020]{Primary: 37P15, 11R09; Secondary: 11R44, 37P05, 37P30, 11R58, 11D41.}
\keywords{abc conjecture, arithmetic dynamics, factorization of iterates, density of prime divisors.}
\date{August 2025}
\begin{document}
\begin{abstract} Let $K$ be a function field of a curve in characteristic zero or a number field over which the $abc$-conjecture holds, fix $\alpha\in K$, and let $f_{d,c}(x)=x^d+c$ for some $d\geq2$ and some $c\in K$. Then for many $c$ and $d$, we prove that $f_{d,c}^n(x)-\alpha$ has at most $d$ factors in $K[x]$ for all $n\geq1$. For example, when $\alpha=0$ we prove that the set  
\[\Big\{d\,:\, f_{d,c}^n(x)\;\text{has at most $d$ factors in $K[x]$ for all $n\geq1$ and all $h(c)>0$}\Big\}\]
has positive asymptotic density. We then apply this result to compute the density of prime divisors in certain forward orbits and to establish the finiteness of integral points in certain backward orbits. 
\end{abstract} 
\maketitle
\section{Introduction} 
Let $K$ be a field, let $f\in K(x)$ be a rational function defined over $K$, and let $\alpha\in\mathbb{P}^1(K)$ be a $K$-rational point. It is a central problem in arithmetic dynamics to bound the number of Galois orbits of $f^{-n}(\alpha)$, the set of $n$th iterated preimages of $\alpha$ under $f$, as $n$ grows; see, for instance, \cite[\S 19]{benedetto2019current}. There are many known applications of this problem, including to the computation of arboreal Galois representations \cite{MR4188198,cubic:abc,Ferraguti:quad,Riccati,MR3220023}, to the calculation of densities of prime divisors in forward dynamical orbits \cite{MR3335237,Jones}, and to the finiteness of integral points in backward orbits \cite{jones2017eventually,sookdeo2011integer}. We study this problem when $f(x)=x^d+c$ and $K$ is a function field of a curve in characteristic zero or a number field over which the $abc$-conjecture \cite{elkies:1991} holds.          

In what follows, $\varphi(\cdot)$ and $\tau(\cdot)$ denote the Euler totient function and divisor counting function on $\mathbb{Z}$ respectively and $h(\cdot)$ denotes the Weil height function on $K$. 

\begin{thm}\label{thm:eventualstability+numberfields+abc}
Let $K$ be a number field over which the $abc$-conjecture holds, let $\alpha\in K$, let $d\geq2$, and let $f(x)=x^d+c$ for some $c\in K$. Then there exist constants $0<C_1(\alpha,K)<1$ and $C_2(\alpha,K)$ such that if the following conditions are all satisfied: \vspace{.15cm}  
\begin{enumerate}
\item[\textup{(1)}] $\varphi(d)>C_1(\alpha,K)d$, \vspace{.2cm}   
\item[\textup{(2)}] all of the prime factors $p|d$ satisfy $p>C_2(\alpha,K)$, \vspace{.2cm} 
\item[\textup{(3)}] $\alpha$ is not a fixed point of $f$, \vspace{.2cm}
\item[\textup{(4)}] $\min\big\{h(c),h(c-\alpha)\big\}>0$, \vspace{.2cm} 
\end{enumerate}
then $f^n(x)-\alpha$ has at most $\tau(d)$ factors in $K[x]$ for all $n\geq1$.  
\end{thm}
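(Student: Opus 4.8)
The plan is to reduce to a question about binomials via Capelli's lemma, and then to control those binomials with the $abc$-conjecture applied to the adjusted post-critical orbit $\{f^n(0)-\alpha\}_{n\ge1}$ of $f$.

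First, write $f^n(x)-\alpha=F_{n-1}(x^d+c)$ where $F_{n-1}(y):=f^{n-1}(y)-\alpha$. If $g$ is a monic $K$-irreducible factor of $F_{n-1}$ with root $\beta$, then $g(x^d+c)=N_{K(\beta)/K}\bigl(x^d-(\beta-c)\bigr)$, and by Capelli's lemma the number of $K$-irreducible factors of $g(x^d+c)$ equals the number of $K(\beta)$-irreducible factors of the binomial $x^d-(\beta-c)$. Summing over $g$, the factor count $a_n$ of $f^n(x)-\alpha$ is nondecreasing in $n$, and $a_n=a_{n-1}$ exactly when $x^d-(\beta-c)$ is irreducible over $K(\beta)$ for every such $\beta$; by Capelli's criterion for binomials this can fail only if $\beta-c\in K(\beta)^q$ for some prime $q\mid d$, or $4\mid d$ and $\beta-c\in-4\,K(\beta)^4$. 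So it suffices to prove (a) the base binomial $x^d-(\alpha-c)=f(x)-\alpha$ has $a_1\le\tau(d)$ factors over $K$, and (b) neither power condition ever occurs at a level $n\ge2$, forcing $a_n=a_1$ for all $n$.

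For (a) I would invoke the classical description of the factorization of $x^d-a$ over a field of characteristic zero: its number of irreducible factors is at most $\tau(d)$ provided the base field contains no root of unity whose order is divisible by a prime $p\mid d$ (beyond the $\{\pm1\}$-obstruction attached to the $4\mid d$ case). Since $K$ is a fixed number field, its group of roots of unity is finite, so that taking $C_2(\alpha,K)$ larger than every odd prime $\ell$ with $\zeta_\ell\in K$ removes those obstructions via hypothesis (2), while (1), $\varphi(d)>C_1(\alpha,K)d$, caps the number of primes dividing $d$, and (4), $h(c-\alpha)>0$, discards the degenerate values $\alpha-c\in\{0\}\cup\mu_K$. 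This is a finite computation at the level of the field of definition; it, together with the next step, pins down $C_1$ and $C_2$.

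The crux is (b). Suppose new factoring occurs at level $n$: $\beta-c=\gamma^q$ with $\gamma\in K(\beta)$, $q\mid d$ prime (the $-4K(\beta)^4$ case is analogous). Since $\beta=c+\gamma^q$ we get $K(\gamma)=K(\beta)$, so $N_{K(\beta)/K}(\beta-c)=N_{K(\gamma)/K}(\gamma)^q$; as $N_{K(\beta)/K}(\beta-c)=(-1)^{\deg g}g(c)$, this makes $\pm g(c)$ a genuine $q$-th power in $K$. In the range where $f^{n-1}(x)-\alpha$ is still irreducible, $g=f^{n-1}(x)-\alpha$ and hence $\pm(f^{n-1}(c)-\alpha)=\pm(f^n(0)-\alpha)$ is a $q$-th power; feeding this into the identity $f^n(0)-\alpha=f^{n-1}(0)^d-(\alpha-c)$ produces an $abc$-triple over $K$ in which almost all of the height of $f^{n-1}(0)^d$ is concentrated in a $q$-th power, so the radical has height $\lesssim(1+d/q)\,h(f^{n-1}(0))+O(1)$; the $abc$-conjecture for $K$ then forces $d\le\tfrac{q}{q-1}+o(1)$ as $h(f^{n-1}(0))\to\infty$, i.e. $d\le 2+o(1)$, contradicting $d\ge3$ for all large $n$. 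The leftover case $d=q=2$ is the elementary statement that $m^2=N^2+c$ with $c\ne0$ forces $N$ to have bounded height. When $\alpha-c$ is itself a perfect power (so $a_1>1$), one descends to the fields $K(\rho)$ for $\rho$ a root of a factor of $x^d-(\alpha-c)$ and reruns the argument there, using (1)--(2) to keep the total count at $\le\tau(d)$. The genuinely hard points are: making the ``$\pm g(c)$ is a $q$-th power'' mechanism $abc$-usable over $K$ when $f^{n-1}(x)-\alpha$ has several factors; and --- above all --- handling the case where $0$ is preperiodic for $f$, so that $\{f^n(0)\}$ is finite and the height input for $abc$ evaporates. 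In that case $\{f^n(0)-\alpha\}$ lies in a fixed, finitely generated $S$-unit group, and one must instead run a unit-equation / integral-points-in-orbits argument; this is where hypotheses (3) ($\alpha$ not fixed) and $h(c)>0$ in (4) do their real work. Finally, the finitely many small-$n$ cases not reached by the asymptotic $abc$ estimate are absorbed into $C_1(\alpha,K)$ and $C_2(\alpha,K)$.
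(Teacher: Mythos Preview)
Your Capelli reduction and norm argument are the right opening moves, and in the case where $f(x)-\alpha$ is already irreducible over $K$ your abc estimate is essentially the paper's Proposition~2.4 (stability): from $f^{n-1}(0)^d+(c-\alpha)=\pm z^q$ one bounds $d$ using that $h(f^{n-1}(0))\ge h(c)>0$ for all $n$ (Lemma~2.3), so there is no asymptotic-in-$n$ issue and no ``small $n$'' residue to absorb.

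The genuine gap is the case $a_1>1$, i.e.\ when $c-\alpha=-y^m$ for some $m\mid d$, $m>1$.  Your ``descend to $K(\rho)$ and rerun'' is where the argument breaks: the abc constants (and the height lower bound $B_3$) depend on the ground field, and $K(\rho)$ varies with $c$ and $d$, so you cannot extract constants $C_1(\alpha,K)$, $C_2(\alpha,K)$ this way.  The paper avoids this by working instead over the cyclotomic pieces $K(\zeta)$ for $\zeta\in\mu_m$: the factors of $f(x)-\alpha$ are the binomials $g_\zeta(x)=x^{d/m}-\zeta y$, and if $g_\zeta(f^n(0))=z^p$ in $K(\zeta)$ then one takes the product over the Galois orbit of $\zeta$ (of order $a\mid m$) to obtain an equation $X^{ra}-y^a=G(X,y)\cdot u^p$ \emph{over $K$}, where $G$ collects the $a-\varphi(a)$ terms with $o(\omega)<a$.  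Applying abc over $K$ then gives an inequality in which the loss is governed by $(a-\varphi(a))/a$, and it is precisely here that condition~(1), $\varphi(d)>C_1 d$, enters: it forces $(a-\varphi(a))/a$ small for every divisor $a\mid d$, which is what makes the inequality a contradiction.  Your description of (1) as merely ``capping the number of primes dividing $d$'' misses this mechanism entirely.

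Two smaller points.  First, your worry about $0$ preperiodic is unfounded: once $h(c)>0$ and $d$ is large, the orbit $\{f^n(0)\}$ has strictly increasing height (this is the content of Lemma~2.3 and its proof), so no $S$-unit argument is needed.  Second, your claim that the base binomial $x^d-(\alpha-c)$ has at most $\tau(d)$ factors ``provided $K$ contains no bad roots of unity'' is correct but is proved in the paper by exhibiting the factors $g_a=\prod_{o(\zeta)=a}g_\zeta$ for $a\mid m$ and showing each is $K$-irreducible via the transitive Galois action on $\{\zeta:o(\zeta)=a\}$ (Lemma~2.5); condition~(2) makes $d$ coprime to $\delta_K\cdot|\mu_K|$, which is what guarantees that transitivity.
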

\begin{rem}\label{rem:non-prime+powers} We note that conditions (1) and (2) hold for a set of $d$'s of positive density, including all prime powers $d=p^m$ when $p$ is sufficiently large; see Corollary \ref{cor:evetual+stability+for+many+pairs}. More generally, if $t\geq1$ and $d$ is an integer with at most $t$ distinct prime factors, then there exists a constant $C_3(\alpha,K,t)$ such that both (1) and (2) hold whenever the prime factors $p|d$ satisfy $p>C_3(\alpha,K,t)$; see Lemma \ref{lem:primefactors}. Likewise, since we view $\alpha$ as fixed, condition (4) holds for all but finitely many $c\in K$. In particular, Theorem \ref{thm:eventualstability+numberfields+abc} is (to our knowledge at least) the first eventual stability result (even conditional result) for a large class of unicritical polynomials of non prime powered degree with nonzero basepoint; see \cite{MR4188198,bridy2021question,MR3335237,jones2017eventually} for prior results.\end{rem}
\begin{rem} It is known that the number of factors of $f^n(x)-\alpha$ is unbounded (i.e., the pair $(f,\alpha)$ is not eventually stable) when $\alpha$ is a periodic point for $f$; see page 3 of \cite{jones2017eventually}. However, it follows from the proof of Theorem \ref{thm:eventualstability+numberfields+abc} that conditions (2), (4) and the abc-conjecture imply that any periodic point is fixed, a property precluded by condition (3).        
\end{rem}
\begin{rem} In fact, the proof of Theorem \ref{thm:eventualstability+numberfields+abc} gives a complete description of the factorization of $f^n(x)-\alpha$ into irreducibles. For example, if  
\[m:=\max\Big\{m\,: m|d\;\;\text{and}\;\; c-\alpha=-y^m\;\text{for some}\; y\in K\Big\}, \]
then we prove that $f^n(x)-\alpha$ has exactly $\tau(m)$ factors. 
\end{rem}
\begin{rem} Even when $K=\mathbb{Q}$, $d=p$ is prime, and $\alpha=0$, more complicated factorization patterns occur in small degree. For example, if $f(x)=x^2-16/9$, then $f^n(x)$ has exactly $4$ irreducible factors for all $n\geq3$. On the other hand, Theorem \ref{thm:eventualstability+numberfields+abc} and the $abc$-conjecture suggest that this behavior \emph{can only occur in small degree}. Furthermore, in large degree the bound in Theorem \ref{thm:eventualstability+numberfields+abc} is sharp in general. For example, it is known that if $f(x)=x^p-y^p$ for some nonzero $y\in\mathbb{Z}$ and some prime $p$, then $f^n$ has exactly $\tau(p)=2$ factors over the rational numbers; see \cite[Corollary 5.2]{MR3335237}.     
\end{rem}
Moreover, we prove a stronger, unconditional result when $K$ is a function field of a curve: 
\begin{thm}\label{thm:functionFields} Let $K=k(C)$ be the function field of a curve where $k$ is a field of characteristic zero, let $d\geq2$, and let $f(x)=x^d+c$ for some $c\in K$. Then there exists a constant $C(\alpha,g)$ depending only on the genus $g$ of $K$ and $\alpha$ such that if the following conditions are satisfied:\vspace{.2cm} 
\begin{enumerate}
 \item[\textup{(1)}] all of the prime factors $p|d$ satisfy $p>C(\alpha,K)$, \vspace{.2cm} 
\item[\textup{(2)}] $\alpha$ is not a fixed point of $f$, \vspace{.2cm}
\item[\textup{(3)}] $\min\big\{h(c),h(c-\alpha)\big\}>0$, \vspace{.2cm} 
\end{enumerate}
then $f^n(x)-\alpha$ has at most $d$ factors in $K[x]$ for all $n\geq1$.
\end{thm}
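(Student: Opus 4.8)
The plan is to prove the sharper statement that the number of $K$-irreducible factors of $f^n(x)-\alpha$ is independent of $n$ for $n\geq 1$; since $f(x)-\alpha=x^d-(\alpha-c)$ has degree $d$, this gives the bound. The starting point is the identity
\[
f^n(x)-\alpha\;=\;\prod_{\rho}N_{K(\rho)/K}\big(f^{n-1}(x)-\rho\big),
\]
where $\rho$ runs over representatives of the $\Gal(\overline{K}/K)$-orbits on the roots of $x^d-(\alpha-c)$. Since any root $\theta$ of $f^{n-1}(x)-\rho$ satisfies $\rho=f^{n-1}(\theta)\in K(\theta)$, the polynomial $N_{K(\rho)/K}(h)$ is, for $h$ irreducible over $K(\rho)$, precisely the $K$-minimal polynomial of a root of $h$, hence irreducible over $K$; therefore the number of $K$-irreducible factors of $f^n(x)-\alpha$ equals $\sum_{\rho}\big(\text{number of }K(\rho)\text{-irreducible factors of }f^{n-1}(x)-\rho\big)$. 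It thus suffices to show, for each $d$-th root $\rho$ of $\alpha-c$ and with $L:=K(\rho)$, that $f^m(x)-\rho$ is irreducible over $L$ for all $m\geq 0$: the factor count will then equal the number of Galois orbits on the roots of $x^d-(\alpha-c)$, which is at most $d$.

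For this I would induct on $m$ using Capelli's theorem. If $f^{m-1}(x)-\rho$ is irreducible over $L$ with root $\theta$, then $f^m(x)-\rho=\big(f^{m-1}(x)-\rho\big)\big|_{x\mapsto x^d+c}$ is irreducible over $L$ as soon as $x^d-(\theta-c)$ is irreducible over $L(\theta)$; because hypothesis (1) forces $d$ to be odd, Capelli's theorem reduces the latter to the requirement $\theta-c\notin L(\theta)^p$ for every prime $p\mid d$, for which it is enough that $N_{L(\theta)/L}(\theta-c)\notin L^p$. Since $N_{L(\theta)/L}(\theta-c)=\pm\big(f^{m-1}(c)-\rho\big)=\pm\big(f^m(0)-\rho\big)$, the whole theorem is reduced to the single claim
\[
f^k(0)-\rho\notin L^p\qquad\text{for all }k\geq 1\text{ and all primes }p\mid d .
\]
Hypothesis (3) is used here in both halves: $h(c)>0$ makes $c$ non-constant, so $h\big(f^k(0)\big)$ grows at least geometrically, like $d^{k}$, while $h(c-\alpha)>0$ makes $\alpha-c$, and hence $\rho$, non-constant.

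To prove this non-divisibility claim I would feed the relation $f^{k-1}(0)^d=(\rho-c)+\big(f^k(0)-\rho\big)$ over $L$ into the Mason--Stothers theorem -- the unconditional function-field replacement for the $abc$-conjecture used in Theorem~\ref{thm:eventualstability+numberfields+abc}. If $f^k(0)-\rho$ were a perfect $p$-th power, its radical would be roughly $\tfrac1p$ of its height; comparing this and the Euler-characteristic contribution $2g_L-2$ against $h_L\big(f^{k-1}(0)^d\big)=d\,h_L\big(f^{k-1}(0)\big)$ gives a contradiction once $p$ is large and $k$ exceeds a bound depending only on $\alpha$ and $g$. The point that makes the constant uniform is that, although $[L:K]$ may be large, dividing the Mason--Stothers inequality by $[L:K]$ and estimating $2g_L-2$ by Riemann--Hurwitz for the radical extension $L/K$ leaves an error controlled purely in terms of $g$, $h_K(\alpha)$ and $h_K(c)$, the last of which is absorbed using $h_K(c)\geq 1$; this is why $C(\alpha,K)$ ends up depending only on $\alpha$ and the genus. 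To handle the finitely many small $k$, I would symmetrize over the $d$-th roots of $\alpha-c$ -- choosing the putative $p$-th roots Galois-equivariantly, so their product lies in $K$ -- to deduce $f^{k+1}(0)-\alpha\in K^p$, apply Mason--Stothers a second time over $K$ to $f^k(0)^d+(c-\alpha)=f^{k+1}(0)-\alpha$ to bound $k$ again, and then use hypotheses (2) and (3): any configuration surviving both bounds would, after a finer analysis, force $\rho$ to be a periodic point of $f$ and hence, by the same rigidity, a fixed point, whereupon $\rho^d+c=\rho$ and $\rho^d=\alpha-c$ force $\rho=\alpha$ and $\alpha^d+c=\alpha$, contradicting that $\alpha$ is not a fixed point.

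The hard part will be precisely this last step: converting the Mason--Stothers estimates, which by themselves only bound the level $k$ at which a perfect $p$-th power could appear, into an outright contradiction in the low range -- this is where one needs the full force of hypotheses (2) and (3) rather than mere height growth, mirroring the remark that periodicity of the basepoint is incompatible with these hypotheses. A second, more bookkeeping-heavy difficulty is to keep every implicit constant -- the Mason--Stothers thresholds, the Riemann--Hurwitz bounds for the branch fields $K(\rho)$, the comparison of heights over $K$ and over $K(\rho)$, and the contribution of the conductor (the places where $f^{k-1}(0)$, $c$, $\alpha$ and $\rho$ simultaneously degenerate) -- depending only on $\alpha$ and $g$, never on $c$ or $d$, so that one $C(\alpha,K)$ works for all admissible $(c,d)$.
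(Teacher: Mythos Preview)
Your overall plan—Capelli reduction to a $p$-th power obstruction, then Mason–Stothers—matches the paper's, but two steps break down. First, the symmetrization is misstated: from $f^k(0)-\rho\in K(\rho)^p$ for a single root $\rho$, Galois-equivariance only gives you the product over the \emph{orbit} of $\rho$, namely $g(f^k(0))\in K^p$ where $g$ is the $K$-minimal polynomial of $\rho$. You do not get $f^{k+1}(0)-\alpha\in K^p$, since that product ranges over \emph{all} $d$-th roots of $\alpha-c$, and these fall into several orbits exactly when $f-\alpha$ is reducible—the interesting case. The paper sidesteps this (and your Riemann–Hurwitz bookkeeping for $g_L$) by first base-changing to algebraically closed $k$: the irreducible factors of $f-\alpha$ are then explicitly $g_\zeta(x)=x^r-\zeta y\in K[x]$ with $r=d/m$, and one applies Fermat–Catalan directly over $K$ to $f^k(0)^r-\zeta y=z^p$.

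Second, the degenerate case $\rho=c$ is a genuine gap. Here $f^k(0)-\rho=f^{k-1}(0)^d$ is a $p$-th power for every $p\mid d$ and every $k\ge 2$, so your Mason–Stothers relation collapses to $X^d=z^p$ and your norm criterion gives no information; worse, $f(x)-\rho=x^d$ is actually reducible, so the inductive claim ``$f^m(x)-\rho$ is irreducible over $L$ for all $m\ge 0$'' is simply false for this $\rho$. No periodicity argument excludes it—$c$ need be neither periodic nor equal to $\alpha$. The paper disposes of it by a direct height bound: $\rho=c$ forces $\alpha=c^d+c$, hence $h(\alpha)\ge(d-1)h(c)\ge d-1$, contradicting hypothesis~(1) once $C(\alpha,g)>h(\alpha)+1$. (The other degeneracy, $\rho=\alpha$, is where hypothesis~(2) is actually used, again via a one-line check rather than a periodicity principle.) Finally, note that your first Mason–Stothers bound already covers every $k\ge 2$; only $k=1$ is exceptional, where the identity you wrote degenerates to $0=0$.
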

In particular, we note that the set of degrees $d$ for which Theorems \ref{thm:eventualstability+numberfields+abc} and \ref{thm:functionFields} hold has positive lower density (i.e., the limit inferior of the proportion of positive integers $d\leq x$ satisfying conditions (1) and (2) of Theorem \ref{thm:eventualstability+numberfields+abc} as $x\rightarrow\infty$ is positive). Thus, we have established the eventual stability \cite{jones2017eventually} of many pairs of unicritical polynomials and points. 
\begin{cor}\label{cor:evetual+stability+for+many+pairs} 
Let $K$ be a function field of a curve in characteristic zero or a number field over which the $abc$-conjecture holds, let $\alpha\in K$, and let $f_{d,c}(x)=x^d+c$ for some $d\geq2$ and some $c\in K$. Moreover, let $K_\alpha$ be the set of $c\in K$ such that either $\min\{h(c),h(c-\alpha)\}=0$ or $c=\alpha-\alpha^m$ for some $m\geq2$ and let \vspace{.1cm}  
\[\mathcal{G}_\alpha:=\Big\{d\,:\, f_{d,c}^n(x)-\alpha\;\text{has at most $d$ factors in $K[x]$ for all $n\geq1$ and all $c\in K\setminus K_\alpha$}\Big\}.\vspace{.1cm}\]
Then $\mathcal{G}_\alpha$ has positive lower density for all $\alpha\in K$. In particular, when $\alpha=0$, we have that \vspace{.1cm} 
\[\Big\{d\,:\, f_{d,c}^n(x)\;\text{has at most $d$ factors in $K[x]$ for all $n\geq1$ and all $h(c)>0$}\Big\} \vspace{.1cm} \]
has positive lower density. 
\end{cor}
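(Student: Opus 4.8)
The plan is to deduce Corollary~\ref{cor:evetual+stability+for+many+pairs} from Theorems~\ref{thm:eventualstability+numberfields+abc} and \ref{thm:functionFields} by a purely combinatorial argument about densities of integers, together with the observation (recorded in Remark~\ref{rem:non-prime+powers}) that the number-field constants $C_1,C_2$ and the function-field constant $C$ depend only on $\alpha$ and $K$ (or its genus), not on $d$ or $c$. First I would fix $\alpha\in K$ and set $C_2=C_2(\alpha,K)$ (in the function-field case, $C=C(\alpha,K)$; in either case call this single constant $B$). For any $d$ all of whose prime factors exceed $B$, conditions (2) of the respective theorem hold; and provided we also have $\varphi(d)>C_1(\alpha,K)\,d$ in the number-field case, condition (1) holds as well. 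Since $\tau(d)\le d$, and since condition (3) (``$\alpha$ not a fixed point of $f$'') and condition (4) (``$\min\{h(c),h(c-\alpha)\}>0$'') translate exactly into the requirement $c\notin K_\alpha$ — being a fixed point means $c-\alpha=\alpha^d-\alpha$, wait, more precisely $f(\alpha)=\alpha$ i.e. $\alpha^d+c=\alpha$ i.e. $c=\alpha-\alpha^d$, which is the $m=d$ case of $c=\alpha-\alpha^m$, and the finitely many $c$ with $h(c)=0$ or $h(c-\alpha)=0$ are exactly the other part of $K_\alpha$ — every such $d$ lies in $\mathcal{G}_\alpha$. Hence $\mathcal{G}_\alpha$ contains the set
\[
\mathcal{D}_B:=\bigl\{d\ge 2\,:\,\text{every prime }p\mid d\text{ satisfies }p>B\text{, and }\varphi(d)>C_1(\alpha,K)\,d\bigr\},
\]
(the condition on $\varphi$ being vacuous in the function-field case), so it suffices to show $\mathcal{D}_B$ has positive lower density.

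To estimate the lower density of $\mathcal{D}_B$, I would invoke Lemma~\ref{lem:primefactors} and the discussion in Remark~\ref{rem:non-prime+powers}: for each fixed $t$, there is a constant $C_3(\alpha,K,t)$ so that conditions (1) and (2) both hold whenever $d$ has at most $t$ distinct prime factors each exceeding $C_3(\alpha,K,t)$. Taking $t=1$ already gives that $\mathcal{D}_B$ contains all prime powers $p^m$ with $p>C_3(\alpha,K,1)$; but prime powers have density zero, so to get positive density I would instead argue directly. The cleanest route: let $P$ be the (finite) set of primes $p\le B$ together with, if needed, finitely many small primes that would violate the $\varphi$-bound, and consider the set $S$ of integers divisible by none of the primes in $P$. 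By a standard sieve / inclusion–exclusion computation (or by the Davenport–Erdős theorem on densities of sets defined by divisibility conditions), $S$ has density $\prod_{p\in P}(1-1/p)>0$. The remaining constraint $\varphi(d)>C_1(\alpha,K)\,d$ is equivalent to $\prod_{p\mid d}(1-1/p)>C_1$; since $C_1<1$ is a fixed constant strictly less than $1$ and the smallest prime factor of any $d\in S$ exceeds $B$, one has $\prod_{p\mid d}(1-1/p)\ge\prod_{p>B}(1-1/p)$ over the primes actually dividing $d$ — but this infinite product is $0$, so I cannot simply bound it below uniformly. Instead I would restrict further to $d$ with a bounded number of prime factors, say $d$ squarefree with exactly $t$ prime factors all exceeding $\max(B,C_3(\alpha,K,t))$ for a suitable fixed $t$ chosen large enough that such integers still have positive lower density (they do: the set of squarefree integers with exactly $t$ prime factors, all larger than any fixed bound, has positive lower density for every $t\ge 1$ by Landau's theorem on the distribution of such integers, or more simply because removing finitely many small primes changes the relevant density by a positive factor). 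For $d$ squarefree with $t$ prime factors all $>N$, we get $\varphi(d)/d=\prod_{p\mid d}(1-1/p)>(1-1/N)^t$, which exceeds $C_1$ once $N$ is large; so fixing such a $t$ and $N=\max(B,C_3,C_1\text{-threshold})$ exhibits a positive-lower-density subset of $\mathcal{G}_\alpha$.

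For the final ``in particular'' clause with $\alpha=0$: here $K_0=\{c\in K:\min\{h(c),h(c)\}=0\}\cup\{0-0^m:m\ge2\}=\{c:h(c)=0\}\cup\{0\}$, so $K\setminus K_0$ is exactly $\{c\in K:h(c)>0\}$, and $f_{d,c}^n(x)-0=f_{d,c}^n(x)$; thus the displayed set is literally $\mathcal{G}_0$ intersected with the statement ``at most $d$ factors'', and positive lower density follows from the first part. I would also remark that for $\alpha=0$ one should double-check conditions (3) and (4) of Theorem~\ref{thm:eventualstability+numberfields+abc} reduce correctly — $0$ is a fixed point of $x^d+c$ iff $c=0$, and $\min\{h(c),h(c-0)\}=h(c)$ — which matches $K_0$ as described.

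The main obstacle, and the only genuinely non-formal point, is the density computation: showing that after excising the finitely many ``bad'' primes $p\le B$ (and after imposing the $\varphi$-condition via a bounded-number-of-prime-factors restriction) what remains still has positive lower density. This is elementary but requires either a careful sieve estimate with explicit error terms or an appeal to a packaged result (Davenport–Erdős, or the classical asymptotic for integers free of small prime factors, or Landau's theorem for $\pi_t(x)$); the subtlety is that one cannot combine ``$\varphi(d)>C_1 d$'' and ``smallest prime factor $>B$'' naively over \emph{all} eligible $d$, because the constraint $\varphi(d)/d>C_1$ fails for $d$ with many prime factors, so the argument must be organized to extract a positive-density piece living inside the intersection — most safely by fixing the number of (large) prime factors in advance.
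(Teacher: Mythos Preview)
Your reduction is correct: once you observe that $c\notin K_\alpha$ encodes conditions (3) and (4), and that $\tau(d)\le d$, the corollary follows from Theorems~\ref{thm:eventualstability+numberfields+abc} and \ref{thm:functionFields} as soon as you know that the set
\[
\{d:\varphi(d)>C_1d\text{ and every prime }p\mid d\text{ satisfies }p>C_2\}
\]
has positive lower density. This is exactly the content of the paper's Lemma~\ref{lem:good+d's+positive+density}.

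However, your proposed density argument contains a genuine error. You claim that ``the set of squarefree integers with exactly $t$ prime factors, all larger than any fixed bound, has positive lower density for every $t\ge1$ by Landau's theorem.'' This is false: Landau's asymptotic gives
\[
\#\{d\le x:\omega(d)=t\}\sim \frac{x}{\log x}\cdot\frac{(\log\log x)^{t-1}}{(t-1)!},
\]
which is $o(x)$; the set of integers with any \emph{fixed} number of prime factors has density zero. So restricting to a bounded number of prime factors cannot produce a positive-density subset, and your scheme of ``fixing $t$ in advance'' collapses for the same reason that your earlier prime-power idea did.

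The paper's proof of Lemma~\ref{lem:good+d's+positive+density} avoids this trap by a first-moment argument. One takes $S=\{d:\text{all }p\mid d\text{ satisfy }p>M\}$, which has density $c_M=\prod_{p\le M}(1-1/p)>0$, and then shows that \emph{within} $S$ the additive function $\sum_{p\mid d}1/p$ has small average: a short computation gives $\sum_{d\in S,\,d\le x}\sum_{p\mid d}1/p<x/M$. By Markov's inequality, at most $x/\sqrt{M}$ of these $d$ have $\sum_{p\mid d}1/p>1/\sqrt{M}$; since Mertens' theorem gives $c_M\gg 1/\log M\gg 1/\sqrt{M}$, a positive proportion of $d\le x$ lie in $S$ and satisfy $\sum_{p\mid d}1/p\le 1/\sqrt{M}$, hence $\varphi(d)/d>\exp(-2/\sqrt{M})$. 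Choosing $M$ large enough forces both $\varphi(d)/d>C_1$ and $p>C_2$ for all $p\mid d$. The point you missed is that one must allow $\omega(d)$ to be unbounded while controlling $\sum_{p\mid d}1/p$ on average, rather than bounding $\omega(d)$ pointwise.
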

In fact, in the special case when $\alpha=0$ and $d=p$ is prime, sufficient control over the factorization of iterates implies certain density results for the prime factors in forward orbits; see, for example, \cite[Theorem 1.1]{MR3335237}. To make this point precise, we fix some notation. 

For a number field $K$, let $\mathcal{O}_K$ denote the ring of integers of $K$. Moreover, given a prime ideal $\mathfrak{p}\subset\mathcal{O}_K$, we let $v_\mathfrak{p}$ and $N(\mathfrak{p})=|\mathcal{O}_K/\mathfrak{p}\mathcal{O}_K|$ denote the corresponding valuation and norm of $\mathfrak{p}$ respectively. Likewise, to a polynomial $f\in K[x]$ and a point $\beta\in K$ we consider the set
\vspace{.1cm} 
\[
\mathcal{P}(f,K,\beta)=\big\{\mathfrak{p}\subset\mathcal{O}_K\,:\, v_{\mathfrak{p}}(f^n(b))>0\;\text{for some $n\geq1$ such that $f^n(\beta)\neq0$} \big\}
\vspace{.1cm} 
\]
of prime divisors in $K$ of the orbit of $\beta$ under $f$. In particular, it is expected that $\mathcal{P}(f,K,\beta)$ is a sparse set of primes in many (if not most) situations. Moreover, this property has applications to the dynamical Mordell-Lang conjecture \cite{benedetto2012case} and to questions about p-adic Mandelbrot set \cite{jones2007iterated}. Specifically, we wish to compute the (upper) natural density of the set of primes $\mathcal{P}(f,K,\beta)$,
\vspace{.1cm} 
\[\mathcal{D}(f,K,\beta)=\limsup_{X\rightarrow\infty}\frac{\#\{\mathfrak{p}\in\mathcal{P}(f,K,\beta)\,:\, N(\mathfrak{p})\leq X\}}{\#\{\mathfrak{p}\,:\, N(\mathfrak{p})\leq X\}}. \vspace{.1cm} 
\]
 In particular, if we view $f(x)=x^p+c$ for $c\in K$ with $h(c)>0$ and $p$ a prime as a polynomial over the extension $K(\zeta_p)/K$ where $\zeta_p$ is a primitive pth root of unity, then we may combine the proof of Theorem \ref{thm:eventualstability+numberfields+abc} with \cite[Theorem 1.1]{MR3335237} to prove that $\mathcal{P}(f,K(\zeta_p),\beta)$ has density zero within the full set of primes in $K(\zeta_p)$ for all $\beta\in K(\zeta_p)$ and all sufficiently large $p$ (depending only on the base field $K$).     
\begin{cor}\label{cor:density+roots+unity} Let $K$ be a number field over which the $abc$-conjecture holds, let $f(x)=x^p+c$ for some prime $p\geq2$ and $c\in K$ with $h(c)>0$, and let $\zeta_p\in\overline{K}$ be a primitive pth root of unity. Then $\mathcal{D}(f,K(\zeta_p),\beta)=0$ for all $p\gg_K0$ and all $\beta\in K(\zeta_p)$.   
\end{cor}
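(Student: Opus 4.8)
The plan is to deduce Corollary~\ref{cor:density+roots+unity} by combining the factorization control from Theorem~\ref{thm:eventualstability+numberfields+abc} (in the special case $\alpha=0$, $d=p$ prime) with the density machinery of \cite[Theorem 1.1]{MR3335237}. First I would pass to the field $L=K(\zeta_p)$, noting that if the $abc$-conjecture holds over $K$ then it holds over the fixed finite extension $L$, so Theorem~\ref{thm:eventualstability+numberfields+abc} applies with base field $L$, basepoint $\alpha=0$, and $f(x)=x^p+c$. Since $\alpha=0$ is a fixed point of $f$ only when $c=0$, which is excluded by $h(c)>0$, condition (3) holds; condition (4) reads $\min\{h(c),h(c)\}=h(c)>0$, which holds by hypothesis; and conditions (1)--(2) for a prime $d=p$ reduce to $p-1>C_1(0,L)p$ and $p>C_2(0,L)$, both of which hold once $p\gg_L 0$, equivalently $p\gg_K 0$ since $[L:K]\mid p-1$ is controlled. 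Thus for all large $p$ the polynomial $f^n(x)$ has at most $\tau(p)=2$ irreducible factors over $L$ for every $n\geq1$; in fact, as noted in the remarks following the theorem, the precise count is $\tau(m)$ where $m$ is the largest divisor of $p$ with $c=-y^m$ for some $y\in L$, so $f^n(x)$ is either irreducible over $L$ or factors as a product of exactly two irreducibles of equal degree.

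The second step is to feed this factorization control into the criterion of \cite[Theorem 1.1]{MR3335237}, which computes $\mathcal{D}(f,L,\beta)$ in terms of the degrees and ramification behaviour of the irreducible factors of the iterates $f^n(x)$ together with the critical orbit of $f$. Concretely, over $L=K(\zeta_p)$ the polynomial $x^p+c$ has a single critical point $0$ (with the $p$-fold ramification needed to apply the unicritical case of that theorem), its postcritical orbit is the forward orbit of $c$, and the "stability" hypothesis required there---that $f^n(x)$ does not acquire too many factors---is exactly what we have just established. The conclusion of \cite[Theorem 1.1]{MR3335237} in this situation is that the density of prime divisors of the orbit of any $\beta$ is zero: the at-most-two-factor behaviour forces the relevant "newly ramified primes at level $n$" to be governed by the squarefree kernel of $f^n(c)-$const, and the $abc$-conjecture (already invoked in Theorem~\ref{thm:eventualstability+numberfields+abc}) guarantees these grow fast enough that $\sum_n 1/(\text{index at level }n)$ diverges, which by the Borel--Cantelli-type argument in \cite{MR3335237} yields $\mathcal{D}(f,L,\beta)=0$. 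I would state explicitly which hypotheses of \cite[Theorem 1.1]{MR3335237} are being checked: eventual stability of $(f,0)$ over $L$, non-periodicity of $0$, and the height condition $h(c)>0$ ensuring the critical orbit is infinite.

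The main obstacle I anticipate is bookkeeping the dependence of the constants on the base field: Theorem~\ref{thm:eventualstability+numberfields+abc} gives constants $C_1(0,L), C_2(0,L)$ depending on $L=K(\zeta_p)$, which \emph{itself varies with $p$}, so one must argue that these constants can be bounded uniformly in terms of $K$ alone for the primes $p$ under consideration. The point is that the $abc$-constants and the height/ramification estimates underlying Theorem~\ref{thm:eventualstability+numberfields+abc} depend on $L$ only through its degree and discriminant (or through the $abc$-constant $C(\epsilon,L)$), and $[K(\zeta_p):K]\leq p-1$ with controlled ramification; one checks that the resulting bound on $C_i(0,K(\zeta_p))$ grows at most polynomially in $p$, while conditions (1)--(2) demand $p$ larger than these---so a fixed-point/self-improving argument, or simply the observation that "$p$ larger than a polynomial in $p$" is eventually vacuous only if the polynomial has degree $<1$, shows the conditions hold for $p\gg_K0$. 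I would isolate this uniformity as the key lemma; once it is in place, the rest is a direct citation of \cite{MR3335237}. A secondary (minor) point is ensuring the unicritical-polynomial hypotheses of \cite[Theorem 1.1]{MR3335237} are literally met over $L$ rather than just over $\mathbb{Q}$ or $K$; this is routine since $x^p+c$ remains unicritical over any field of characteristic $0$.
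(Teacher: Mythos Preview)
Your proposal correctly identifies the structure of the argument and, importantly, you also correctly identify the genuine obstacle: applying Theorem~\ref{thm:eventualstability+numberfields+abc} over the base field $L=K(\zeta_p)$ produces constants $C_1(0,L),C_2(0,L)$ that depend on a field varying with $p$. However, your proposed resolution of this obstacle does not go through. The constants in the proof of Theorem~\ref{thm:eventualstability+numberfields+abc} involve both the $abc$-constants $B_1(L),B_2(L)$ and the minimum positive height $B_3(L)$ of $L$. Even granting that $abc$ over $K$ implies $abc$ over $K(\zeta_p)$, the implied constants depend on $[K(\zeta_p):\mathbb Q]$, which grows linearly in $p$; and $B_3(K(\zeta_p))$ is expected (Lehmer) to decay like $1/p$. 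When these are combined in the bound $d\leq B_1(1+\cdots)+B_2/B_3$ from the proof of Proposition~\ref{prop:stability}, the right-hand side can grow at least linearly in $p$, so the condition ``$p$ exceeds this bound'' need not be satisfied for any $p$. Your remark that this is fine provided the growth is of degree $<1$ is exactly right, but you have not established sublinear growth, and it is not clear that one can.

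The paper avoids this uniformity problem entirely by working over $K$ rather than over $K(\zeta_p)$. The key device is Remark~\ref{rem:pth+power}: for odd primes $p$, an element $y\in K$ is a $p$th power in $K(\zeta_p)$ if and only if it is already a $p$th power in $K$ (since $[K(\zeta_p):K]\leq p-1$ is coprime to $p$). Thus when $f$ is irreducible over $K$, Proposition~\ref{prop:stability} applied over $K$ (with constants depending only on $K$) shows $f^n(0)$ is not a $p$th power in $K$, hence not in $K(\zeta_p)$; and when $f$ is reducible over $K$, the proof of Theorem~\ref{thm:eventualstability+numberfields+abc} (again over $K$, using the Galois-conjugation trick of Case~(3) to pull the relevant Fermat--Catalan equation down to $K$) shows $g_\zeta(f^n(0))$ is not a $p$th power in $K(\zeta)$. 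In both cases the $abc$-input and height bounds are taken over the fixed field $K$, so the threshold $p\gg_K0$ is genuinely uniform. Once the non-$p$th-power conclusion is in hand over $K(\zeta_p)$, one cites \cite[Theorem~1.1]{MR3335237} directly, exactly as you propose.
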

As a consequence, when $K=\mathbb{Q}$ we obtain the following partial result on the size of the prime divisors of orbits of $x^p+c$ without extending the base field to include roots of unity.\vspace{.1cm}  
\begin{cor}\label{cor:density+Q} Let $b,c\in\mathbb{Q}$ and let $f(x)=x^p+c$ for some prime $p$. Moreover, assume that the abc-conjecture holds over $\mathbb{Q}$. Then for all sufficiently large $p$, the set of primes $q\equiv1\Mod{p}$ that belong to $\mathcal{P}(f,\mathbb{Q},b)$ has density zero within the full set of primes $q\equiv1\Mod{p}$.  
\end{cor}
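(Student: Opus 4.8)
The plan is to transfer the question to the cyclotomic field $\mathbb{Q}(\zeta_p)$ and then invoke Corollary~\ref{cor:density+roots+unity}. First I would dispose of the degenerate case: since the only rationals of height zero are $0$ and $\pm1$, we may assume $h(c)>0$, the case $c=0$ being trivial (then $\mathcal{P}(f,\mathbb{Q},b)$ consists of the finitely many primes dividing the numerator of $b$) and the cases $c=\pm1$ being easily treated by hand. Fix $p$ large enough that Corollary~\ref{cor:density+roots+unity} applies with $K=\mathbb{Q}$, so that $\mathcal{D}(f,\mathbb{Q}(\zeta_p),b)=0$.

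The key observation is that a rational prime $q\equiv1\Mod{p}$ is unramified in $\mathbb{Q}(\zeta_p)$ (indeed $q\neq p$) and maps to the identity under the canonical isomorphism $\Gal(\mathbb{Q}(\zeta_p)/\mathbb{Q})\cong(\mathbb{Z}/p\mathbb{Z})^{\times}$, hence splits completely into $p-1$ prime ideals $\mathfrak{q}$, each of residue degree $1$ and norm $q$. If, moreover, $q\in\mathcal{P}(f,\mathbb{Q},b)$, say $v_q(f^n(b))>0$ for some $n$ with $f^n(b)\neq0$, then $e(\mathfrak{q}/q)=1$ forces $v_{\mathfrak{q}}(f^n(b))=v_q(f^n(b))>0$ for \emph{every} prime $\mathfrak{q}$ above $q$; so all $p-1$ of these primes lie in $\mathcal{P}(f,\mathbb{Q}(\zeta_p),b)$. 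Writing $S(X)$ for the number of primes $q\leq X$ with $q\equiv1\Mod{p}$ and $q\in\mathcal{P}(f,\mathbb{Q},b)$, this gives the bound $(p-1)\,S(X)\leq\#\{\mathfrak{q}\in\mathcal{P}(f,\mathbb{Q}(\zeta_p),b):N(\mathfrak{q})\leq X\}$.

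To conclude I would compare the two counting functions. Set $P_1(X)=\#\{q\leq X:q\equiv1\Mod{p}\}$ and $\widetilde{P}(X)=\#\{\mathfrak{q}\subset\mathcal{O}_{\mathbb{Q}(\zeta_p)}:N(\mathfrak{q})\leq X\}$. A prime ideal of $\mathbb{Q}(\zeta_p)$ of residue degree $\geq2$ has norm at least the square of the rational prime below it, so there are $O_p(\sqrt{X})$ such ideals of norm $\leq X$; together with the single prime above $p$ this yields $\widetilde{P}(X)=(p-1)P_1(X)+O_p(\sqrt{X})$, and since $\widetilde{P}(X)\sim X/\log X$ by the Landau prime ideal theorem (equivalently $P_1(X)\sim\tfrac{1}{p-1}X/\log X$ by Dirichlet) we get $(p-1)P_1(X)=\widetilde{P}(X)(1+o(1))$. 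Dividing the displayed inequality by $P_1(X)$ and letting $X\to\infty$,
\[
\limsup_{X\to\infty}\frac{S(X)}{P_1(X)}\ \leq\ \limsup_{X\to\infty}\frac{\#\{\mathfrak{q}\in\mathcal{P}(f,\mathbb{Q}(\zeta_p),b):N(\mathfrak{q})\leq X\}}{(p-1)P_1(X)}\ =\ \mathcal{D}(f,\mathbb{Q}(\zeta_p),b)\ =\ 0,
\]
which is precisely the assertion. I do not anticipate any real obstacle: all the arithmetic depth sits in Corollary~\ref{cor:density+roots+unity} (hence in Theorem~\ref{thm:eventualstability+numberfields+abc} and the $abc$-conjecture), and the remaining ingredients — the factor-$(p-1)$ bookkeeping and the negligibility of the degree $\geq2$ primes of $\mathbb{Q}(\zeta_p)$ — are routine.
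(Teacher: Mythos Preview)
Your proposal is correct and follows essentially the same route as the paper: both arguments transfer the problem to $\mathbb{Q}(\zeta_p)$ via the complete splitting of primes $q\equiv1\Mod{p}$, observe that degree-$\geq2$ primes of $\mathbb{Q}(\zeta_p)$ contribute negligibly to the prime-counting function, and then invoke Corollary~\ref{cor:density+roots+unity}. The only minor difference is in the treatment of the degenerate values $c\in\{0,\pm1\}$: the paper dispatches these by citing \cite[Corollary~1.3]{MR3335237} (which applies since such $c$ are integral), whereas you say they can be handled ``by hand'' without further detail.
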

Likewise, we obtain the following (perhaps surprising) prediction that the orbit of zero under $x^p+c$ has positive (upper) density. \vspace{.1cm}   
\begin{cor}\label{cor:density+orbit+0} Let $c\in\mathbb{Q}^\times$ and let $f(x)=x^p+c$ for some prime $p$. Moreover, assume that the abc-conjecture holds over $\mathbb{Q}$. Then $\mathcal{D}(f,\mathbb{Q},0)=(p-2)/(p-1)$ for all sufficiently large $p$.   
\end{cor}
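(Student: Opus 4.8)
The plan is to partition the set of rational primes according to the congruence class of $q$ modulo $p$: I will show that all but finitely many primes $q\not\equiv 1\Mod{p}$ lie in $\mathcal{P}(f,\mathbb{Q},0)$, while only a density-zero set of primes $q\equiv 1\Mod{p}$ does; since the former set has Dirichlet density $(p-2)/(p-1)$, this yields the claimed value. First a preliminary reduction: since we are free to assume $p\geq 3$, an elementary argument shows $0$ is not preperiodic for $f$ over $\mathbb{Q}$ — if $c\notin\mathbb{Z}$ then $v_\ell(f^n(0))=p^{n-1}v_\ell(c)\to-\infty$ for any prime $\ell$ in the denominator of $c$, while if $c\in\mathbb{Z}\setminus\{0\}$ then $|f^n(0)|\to\infty$ — so $f^n(0)\neq0$ for all $n\geq1$, the clause ``$f^n(\beta)\neq0$'' in the definition of $\mathcal{P}(f,\mathbb{Q},0)$ is vacuous, and $\mathcal{P}(f,\mathbb{Q},0)$ is exactly the set of primes dividing a numerator of some $f^n(0)$, $n\geq1$. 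Moreover all but finitely many $q$ satisfy $q\nmid\operatorname{denom}(c)$, and for such $q$ the map $f$ reduces to a well-defined $\bar f=x^p+\bar c$ on $\mathbb{F}_q$ with $v_q(f^n(0))\geq0$ for all $n$.

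For the lower bound, fix a prime $q\not\equiv 1\Mod{p}$ of good reduction. Then $\gcd(p,q-1)=1$, so $x\mapsto x^p$ is a bijection of $\mathbb{F}_q$, hence $\bar f\colon\mathbb{F}_q\to\mathbb{F}_q$ is a bijection; its functional graph is therefore a disjoint union of cycles, so every element of $\mathbb{F}_q$ — in particular $0$ — is periodic under $\bar f$. Thus $\bar f^{\,m}(0)=0$ for some $m\geq1$, i.e. $v_q(f^m(0))>0$, and since $f^m(0)\neq0$ we get $q\in\mathcal{P}(f,\mathbb{Q},0)$. Hence $\mathcal{P}(f,\mathbb{Q},0)$ contains all but finitely many primes $q\not\equiv1\Mod{p}$, a set of natural density $(p-2)/(p-1)$ by Dirichlet's theorem, so $\mathcal{D}(f,\mathbb{Q},0)\geq(p-2)/(p-1)$.

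For the matching upper bound I bound the contribution of the primes $q\equiv 1\Mod{p}$. By Corollary \ref{cor:density+Q} applied with $b=0$ (for $p$ sufficiently large), the set of primes $q\equiv1\Mod{p}$ lying in $\mathcal{P}(f,\mathbb{Q},0)$ has density $0$ inside the set of all primes $q\equiv1\Mod{p}$, hence density $0$ inside all primes. (Equivalently one may pass to $\mathbb{Q}(\zeta_p)$: such a $q$ splits completely there, $v_{\mathfrak{q}}(f^n(0))=v_q(f^n(0))$ for every $\mathfrak{q}\mid q$ since $e(\mathfrak q/q)=1$ and $f^n(0)\in\mathbb{Q}$, so $q\in\mathcal P(f,\mathbb Q,0)$ iff $\mathfrak q\in\mathcal P(f,\mathbb Q(\zeta_p),0)$, and then Corollary \ref{cor:density+roots+unity} together with the prime ideal theorem gives the same conclusion.) Combining with the previous paragraph, $\#\{q\in\mathcal{P}(f,\mathbb{Q},0):q\leq X\}=\tfrac{p-2}{p-1}\pi(X)+o(\pi(X))$, so the limsup is in fact a limit and equals $(p-2)/(p-1)$.

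The genuinely hard input is Corollary \ref{cor:density+Q} (equivalently Corollary \ref{cor:density+roots+unity}), which already packages the use of Theorem \ref{thm:eventualstability+numberfields+abc} and the $abc$-conjecture; granting it, the remaining points needing care are only the elementary non-preperiodicity of $0$ for $p\geq3$, the bookkeeping of the finitely many primes of bad reduction, and — if one takes the $\mathbb{Q}(\zeta_p)$ route rather than quoting Corollary \ref{cor:density+Q} directly — the comparison $\#\{\mathfrak{q}:N(\mathfrak{q})\leq X\}=(1+o(1))\pi(X)$ for $\mathbb{Q}(\zeta_p)$.
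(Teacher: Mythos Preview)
Your argument is correct and follows essentially the same route as the paper: split primes by residue class modulo $p$, use that $x\mapsto x^p$ is a bijection on $\mathbb{F}_q$ when $q\not\equiv 1\Mod{p}$ to get the lower bound, and invoke Corollary~\ref{cor:density+Q} for the upper bound. Your treatment is in fact slightly more explicit than the paper's (you spell out why $0$ is not preperiodic and handle bad reduction carefully), but there is no substantive difference in strategy.
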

Finally, we may apply Theorem \ref{thm:eventualstability+numberfields+abc} to deduce the finiteness of integral points in certain backwards orbits. Namely, for $\alpha\in K$ and $f\in K[x]$, we define $O_f^{-}(\alpha)$ to be the set of all roots in $\overline{K}$ of the polynomials $f^n(x)-\alpha$ for all $n\geq1$. Moreover, given $\gamma\in K$ and a finite set $S$ of places of $K$ containing the archimedean ones, we say that $\beta\in\overline{K}$ is \emph{$S$-integral with respect to $\gamma$} if there is no prime $\mathfrak{q}$ of $K(\beta)$ lying over a prime outside of $S$ such that the images of $\gamma$ and $\beta$ modulo $\mathfrak{q}$ coincide. Likewise, we let $\mathcal{O}_{S,\gamma}$ be the set of $\beta\in\overline{K}$ that are $S$-integral with respect to $\gamma$.

In particular, we prove that the $abc$-conjecture implies that $O_f^{-}(\alpha)\cap\mathcal{O}_{S,\gamma}$ is finite for many $f(x)=x^d+c$ and many $\alpha,\gamma\in K$; see \cite{sookdeo2011integer} for a general conjecture along these lines as well as a proof in the case of power maps (i.e., $c=0$).  \vspace{.1cm}  
\begin{cor}\label{cor:integers+backwards+orbits} Let $K$ be a number field over which the $abc$-conjecture holds, let $\alpha\in K$, and let $f(x)=x^d+c$ for some $c\in K$ and $d\geq2$ satisfying conditions (1)-(4) of Theorem \ref{thm:eventualstability+numberfields+abc}. Moreover, assume that $\gamma\in K$ is not preperiodic for $f$ and that $S$ is a finite set of places of $K$ containing the archimedean ones. Then $O_f^{-}(\alpha)\cap\mathcal{O}_{S,\gamma}$ is finite.
\end{cor}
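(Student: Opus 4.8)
\emph{Proof proposal.} The plan is to use Theorem~\ref{thm:eventualstability+numberfields+abc} to convert the integrality condition satisfied by a point of $O_f^-(\alpha)$ at level $n$ into a forward-orbit condition for one of \emph{finitely many fixed} polynomials of degree $\ge 2$, and then to apply Siegel's theorem on integral points of $\mathbb{P}^1$. Since enlarging $S$ only enlarges $\mathcal{O}_{S,\gamma}$, and hence $O_f^-(\alpha)\cap\mathcal{O}_{S,\gamma}$, it is harmless to replace $S$ by any finite superset; I will do this repeatedly, beginning by arranging that $c,\alpha,\gamma\in\mathcal{O}_{K,S}$, so that $f^n(x)-\alpha\in\mathcal{O}_{K,S}[x]$ is monic for every $n$.

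First I would extract the relevant structure from Theorem~\ref{thm:eventualstability+numberfields+abc}. If $g\mid f^n(x)-\alpha$ is irreducible then $g(x^d+c)\mid f^{n+1}(x)-\alpha$, and distinct irreducible $g$'s produce coprime polynomials $g(x^d+c)$; hence the number of distinct monic irreducible factors of $f^n(x)-\alpha$ is nondecreasing in $n$, and since it is bounded by $\tau(d)$ it is eventually constant, say equal to $r$ for $n\ge N$. Taking $n_0\ge N+2$, one checks---using $c\ne0$ and the fact that an irreducible polynomial over $K$ of degree $\ge2$ has no root in $K$---that the $r$ irreducible factors $G_1,\dots,G_r$ of $f^{n_0}(x)-\alpha$ each have degree $\ge d\ge2$, and that, for every $k\ge0$, the polynomials $G_1(f^k(x)),\dots,G_r(f^k(x))$ are exactly the distinct monic irreducible factors of $f^{\,n_0+k}(x)-\alpha$. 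This reduction---replacing the a priori unbounded family of minimal polynomials of points of $O_f^-(\alpha)$ by orbits of the fixed polynomials $G_1,\dots,G_r$---is the step where Theorem~\ref{thm:eventualstability+numberfields+abc} is genuinely used, and I expect it to be the main point to get right (in particular the low-level degeneracies, e.g.\ when $c$ itself lies in $f^{-N}(\alpha)$, must be checked to resolve by level $n_0$).

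Next comes the Diophantine step. Let $\beta\in f^{-n}(\alpha)$ with $n=n_0+k$ and suppose $\beta\in\mathcal{O}_{S,\gamma}$. The minimal polynomial of $\beta$ over $K$ is $G_i(f^k(x))$ for some $i$, so $G_i(f^k(\gamma))=\pm\,N_{K(\beta)/K}(\gamma-\beta)$, which is nonzero because $G_i(f^k(x))$ is irreducible of degree $\ge2$ and hence has no root in $K$. For any prime $\mathfrak q\notin S$ the elements $\gamma,\beta$ are $\mathfrak q$-integral, while $S$-integrality of $\beta$ with respect to $\gamma$ says $\gamma\not\equiv\beta\pmod{\mathfrak q}$; hence $v_{\mathfrak q}(\gamma-\beta)=0$, and therefore $v_{\mathfrak q}\big(G_i(f^k(\gamma))\big)=0$. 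Thus $G_i(f^k(\gamma))\in\mathcal{O}_{K,S}^{\times}$. After one further enlargement of $S$ (depending only on the finitely many fixed polynomials $G_i$, so that each $G_i$ has unit discriminant over $\mathcal{O}_{K,S}$), the assertion $G_i(y)\in\mathcal{O}_{K,S}^{\times}$ for $y\in K$ is equivalent to saying that $y$ is an $S$-integral point of $\mathbb{P}^1_K$ with respect to the $K$-rational effective divisor $(\infty)+\sum_{G_i(\theta)=0}(\theta)$, which has degree $1+\deg G_i\ge3$. By Siegel's theorem the set $V_i:=\{y\in K:G_i(y)\in\mathcal{O}_{K,S}^{\times}\}$ is finite.

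Finally I would assemble the pieces. Any $\beta\in O_f^-(\alpha)\cap\mathcal{O}_{S,\gamma}$ lying over a level $n=n_0+k\ge n_0$ forces $f^k(\gamma)\in V_1\cup\cdots\cup V_r$, a finite set; since $\gamma$ is not preperiodic, the points $f^k(\gamma)$ are pairwise distinct, so only finitely many values of $k$ occur, and for each of them (as well as for the finitely many levels $n<n_0$) the set $f^{-n}(\alpha)$ is finite. Hence $O_f^-(\alpha)\cap\mathcal{O}_{S,\gamma}$ is finite. Apart from the structural reduction, the only delicacy is one of order of quantifiers: $n_0$ and $G_1,\dots,G_r$ must be fixed (via Theorem~\ref{thm:eventualstability+numberfields+abc}) before the last enlargement of $S$, and all enlargements of $S$ must be made before appealing to Siegel's theorem.
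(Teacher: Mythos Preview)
Your argument is correct, but it takes a different route from the paper. The paper's proof is a two--line citation: Theorem~\ref{thm:eventualstability+numberfields+abc} gives eventual stability of the pair $(f,\alpha)$, and then Sookdeo's \cite[Theorems~2.5 and~2.6]{sookdeo2011integer} (or equivalently \cite[Theorem~3.1]{jones2017eventually}) are invoked as a black box to conclude finiteness of $O_f^-(\alpha)\cap\mathcal{O}_{S,\gamma}$ for non-preperiodic $\gamma$. What you have done is essentially reprove that black box in this special setting: you use eventual stability to pin down finitely many irreducible $G_1,\dots,G_r$ whose compositions with iterates of $f$ account for all factors at high level, convert $S$-integrality of $\beta$ relative to $\gamma$ into the unit condition $G_i(f^k(\gamma))\in\mathcal{O}_{K,S}^\times$ via the norm, and then apply Siegel's theorem on $\mathbb{P}^1$ minus at least three points. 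This buys you a self-contained argument that makes the Diophantine input (Siegel) completely explicit, at the cost of having to handle the bookkeeping (stabilization of factors, the degree bound $\deg G_i\ge d$, enlargements of $S$) yourself. One small notational slip: when you write $v_{\mathfrak q}(\gamma-\beta)=0$, you really mean $v_{\mathfrak Q}(\gamma-\beta)=0$ for every prime $\mathfrak Q$ of $K(\beta)$ above $\mathfrak q$, and then deduce $v_{\mathfrak q}\big(N_{K(\beta)/K}(\gamma-\beta)\big)=0$; the conclusion is unaffected.
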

\begin{rem} In fact, increasing the constants in Theorem \ref{thm:eventualstability+numberfields+abc} if need be and assuming that $h(c)\gg_0 K$, we may conclude that $O_f^{-}(\alpha)\cap\mathcal{O}_{S,\gamma}$ is finite for all $\gamma\in K$ whenever $f^2(\gamma)\neq f(\gamma)$; this follows by combining Corollary \ref{cor:integers+backwards+orbits} with part (2) of \cite[Theorem 1.2]{PreperiodicPointsandABC}.  
\end{rem}
\vspace{.1cm} 
\noindent\textbf{Acknowledgements:} We thank Paul Pollack for pointing out the proof of Lemma \ref{lem:good+d's+positive+density}.  
\section{Proofs}
As in \cite{PreperiodicPointsandABC}, the main way we use the abc-conjecture \cite{elkies:1991} is to give effective height bounds for rational solutions to Fermat-Catalan equations; see \cite[Propsition 2.3]{PreperiodicPointsandABC} for stronger version of the following result.   
\begin{prop}\label{prop:FermatCatalan} Let $K$ be an $abc$-field, let $a,b,c\in K^*$, and assume that $ax^n+by^m=c$ for some $x,y,\in K$ and $m,n\geq2$. If $\max\{m,n\}\geq5$, then there exists constants $B_1(K)$ and $B_2(K)$ depending only on $K$ such that 
\[\max\{nh(x),mh(y)\}\leq B_1(K)\max\{h(a),h(b),h(c)\}+B_2(K).\] 
Moreover, when $K$ is a function field of a curve the constants $B_1$ and $B_2$ depend only on the genus of $K$.    
\end{prop}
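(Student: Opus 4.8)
The plan is to apply the $abc$-inequality over $K$ to the projective point
\[
P \;=\; [\,ax^n : by^m : -c\,]\;\in\;\mathbb{P}^2(K),
\]
which lies on the line $X+Y+Z=0$ because $ax^n+by^m-c=0$. Over a number field this inequality has the shape $h(P)\le(1+\epsilon)\,\mathrm{rad}(P)+C(K,\epsilon)$, where $\mathrm{rad}(P)$ is the normalized conductor of $P$ --- the sum of $\frac{1}{[K:\mathbb{Q}]}\log N\mathfrak{p}$ over the finite primes $\mathfrak{p}$ at which $P$ reduces to a point with a vanishing coordinate --- while over a function field the Mason--Stothers theorem gives the sharper $h(P)\le\mathrm{rad}(P)+C(g)$, with no $\epsilon$-loss and with $C$ depending only on the genus $g$. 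Writing $M:=\max\{h(a),h(b),h(c)\}$ and $L:=\max\{nh(x),mh(y)\}$, the whole argument is to sandwich $L$ between a lower bound for $h(P)$ and an upper bound for $\mathrm{rad}(P)$.

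For the lower bound I would project $P$ onto its first and third, and onto its second and third, coordinates and use elementary height inequalities: $h(P)\ge h(ax^n/c)\ge h(x^n)-h(a)-h(c)=nh(x)-h(a)-h(c)$, and likewise $h(P)\ge mh(y)-h(b)-h(c)$, so $h(P)\ge L-2M$. (If $P$ is a constant point --- which can occur only over a function field --- one gets $L\le 2M$ directly, so we may assume $P$ non-constant.) For the conductor, the bad primes of $P$ not dividing $abc$ are exactly those $\mathfrak{p}$ with $v_\mathfrak{p}(x)\ne 0$ or $v_\mathfrak{p}(y)\ne 0$, and the crucial point is that these are not independent: if $v_\mathfrak{p}(a)=v_\mathfrak{p}(b)=v_\mathfrak{p}(c)=0$ and $v_\mathfrak{p}(x)<0$, then applying the ultrametric inequality to $by^m=c-ax^n$ and using $v_\mathfrak{p}(c)=0\ne v_\mathfrak{p}(ax^n)$ forces $v_\mathfrak{p}(by^m)=v_\mathfrak{p}(ax^n)$, hence $m\,v_\mathfrak{p}(y)=n\,v_\mathfrak{p}(x)<0$; symmetrically a denominator prime of $y$ is a denominator prime of $x$, and the same inequality shows that a numerator prime of $x$ has $v_\mathfrak{p}(y)=0$ and conversely. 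Thus, away from $abc$, the three sets $\{v_\mathfrak{p}(x)>0\}$, $\{v_\mathfrak{p}(y)>0\}$ and $\{v_\mathfrak{p}(x)<0\}=\{v_\mathfrak{p}(y)<0\}$ are pairwise disjoint; since a partial radical over numerator (or denominator) primes of $x$ is at most $h(x)$, counting each bad prime once gives
\[
\mathrm{rad}(P)\;\le\;6M+h(x)+h(y)+\min\{h(x),h(y)\}\;\le\;6M+L\Big(\tfrac{1}{n}+\tfrac{1}{m}+\tfrac{1}{\max\{m,n\}}\Big),
\]
using $nh(x)\le L$, $mh(y)\le L$ and $\min\{h(x),h(y)\}\le L/\max\{m,n\}$.

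Combining the two estimates gives $L-2M\le(1+\epsilon)\big(6M+L\,(\tfrac{2}{\max\{m,n\}}+\tfrac{1}{\min\{m,n\}})\big)+C(K,\epsilon)$. Under the hypotheses $\max\{m,n\}\ge 5$ and $m,n\ge 2$ one has $\tfrac{2}{\max\{m,n\}}+\tfrac{1}{\min\{m,n\}}\le\tfrac{2}{5}+\tfrac{1}{2}=\tfrac{9}{10}$, the extremal case being $\{m,n\}=\{2,5\}$, so choosing $\epsilon$ with $(1+\epsilon)\tfrac{9}{10}<1$ (e.g.\ $\epsilon=\tfrac{1}{20}$) makes the coefficient of $L$ on the left strictly positive; solving for $L$ yields $\max\{nh(x),mh(y)\}\le B_1(K)M+B_2(K)$ with $B_1,B_2$ depending only on $K$. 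Over a function field one uses Mason--Stothers in place of the $abc$-conjecture, so there is no $\epsilon$ and $B_1,B_2$ depend only on the genus.

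The step I expect to be the obstacle is the conductor estimate: treating the bad primes of $x$ and of $y$ as independent would only give $\mathrm{rad}(P)\lesssim 2h(x)+2h(y)$, i.e.\ a coefficient $\tfrac{2}{\max\{m,n\}}+\tfrac{2}{\min\{m,n\}}$, which equals $\tfrac{7}{5}>1$ at $\{m,n\}=\{2,5\}$ and wrecks the argument; it is precisely the shared-denominator observation that pulls this down to $\tfrac{2}{\max\{m,n\}}+\tfrac{1}{\min\{m,n\}}\le\tfrac{9}{10}$. (The hypothesis can be relaxed to $\max\{m,n\}\ge 3$ by instead factoring $c-by^m$ over a splitting field and applying the $abc$-inequality to differences of the linear factors; this is essentially the stronger \cite[Proposition 2.3]{PreperiodicPointsandABC}, but the cleaner argument above is all we need here.)
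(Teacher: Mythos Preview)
Your argument is correct. The paper itself does not supply a proof of this proposition; it simply refers the reader to \cite[Proposition~2.3]{PreperiodicPointsandABC} for a stronger statement. What you have written is therefore not a reproduction of the paper's reasoning but an independent, self-contained derivation.

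Your route is the standard one: apply the $abc$-inequality (respectively Mason--Stothers) to the point $[ax^n:by^m:-c]$, bound the projective height from below by $L-2M$, and bound the conductor from above using the ultrametric observation that, away from the primes of $abc$, the denominator primes of $x$ and $y$ coincide while their numerator primes are disjoint. The resulting coefficient $\tfrac{2}{\max\{m,n\}}+\tfrac{1}{\min\{m,n\}}\le\tfrac{9}{10}$ under $\max\{m,n\}\ge5$, $\min\{m,n\}\ge2$ is exactly what makes the inequality close, and your remark that the naive bound $\tfrac{2}{\max}+\tfrac{2}{\min}$ would fail at $\{2,5\}$ correctly identifies the crux. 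The cited result in \cite{PreperiodicPointsandABC} proceeds differently---by factoring one of the pure powers over a splitting field and applying $abc$ to the linear factors---which, as you note, buys the relaxation to $\max\{m,n\}\ge3$ at the cost of passing to an extension; for the applications in this paper your cleaner version already suffices.
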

\begin{rem} In fact, the assumption that $\min\{n,m\}\geq5$ can be weakened; however, the version above is sufficient for our purposes.     
\end{rem}
Likewise, we need the following irreducibility test for polynomials with a unicritical compositional factor; see \cite[Theorem 5.1]{PreperiodicPointsandABC} for a proof.  
\begin{prop}\label{prop:irreducbility} Let $K$ be a field of characteristic zero, let $g\in K[x]$ be monic and irreducible, and let $f=x^d+c$ for some $c\in K$ and some $d>1$. Moreover, assume that $g\circ f$ is reducible over $K$. Then one of the following statements must hold: \vspace{.1cm}
\begin{enumerate} 
\item If $d$ is not divisible by $4$, then there exists a prime $p|d$ such that $g(f(0))=\pm z^p$ for some $z\in K$. \vspace{.1cm} 
\item If $d$ is divisible by $4$, then $g(f(0))=(-1)^{e_1}4^{e_2}z^m$ for some $e_1,e_2\in\{0,1\}$, some divisor $m|d$ with $m>1$, and some $z\in K$. 
\end{enumerate}
\end{prop}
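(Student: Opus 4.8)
The plan is to reduce the reducibility of $g\circ f$ to the reducibility of a binomial $x^d-a$ over an extension of $K$, apply the Vahlen--Capelli irreducibility criterion for binomials, and then descend the resulting pure-power relation back down to $K$ by taking a norm. Write $e=\deg g$ and fix a root $\beta\in\Kbar$ of $g$; set $L=K(\beta)$, so $[L:K]=e$ and the $K$-conjugates of $\beta$ are exactly the roots $\beta_1=\beta,\dots,\beta_e$ of $g$. If $\deg g=1$ and $\beta=c$ then $g\circ f=x^d$ and $g(f(0))=0$, so both conclusions hold trivially with $z=0$; thus we may assume $\beta-c\in L^\times$.

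First I would run the standard tower argument. Let $\gamma\in\Kbar$ be a root of $x^d-(\beta-c)\in L[x]$. Then $f(\gamma)=\gamma^d+c=\beta$, so $g(f(\gamma))=0$, i.e. $\gamma$ is a root of the monic polynomial $g\circ f$ of degree $de$; moreover $\beta=\gamma^d+c\in K(\gamma)$, so $L\subseteq K(\gamma)$ and $[K(\gamma):K]=[K(\gamma):L]\cdot e\le de$. Were $x^d-(\beta-c)$ irreducible over $L$, we would have $[K(\gamma):L]=d$ and hence $[K(\gamma):K]=de$, so that $g\circ f$, being monic of degree $de$ with $\gamma$ among its roots, would be the minimal polynomial of $\gamma$ over $K$ and therefore irreducible --- contrary to hypothesis. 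Hence $x^d-(\beta-c)$ is reducible over $L$.

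Next I would apply the Vahlen--Capelli theorem to $x^d-a$ over $L$ with $a=\beta-c$: reducibility forces either (i) $a=w^p$ for some prime $p\mid d$ and some $w\in L$, or (ii) $4\mid d$ and $a=-4w^4$ for some $w\in L$. Now set $N:=N_{L/K}$. Since the $K$-conjugates of $\beta-c$ are the $\beta_i-c$ and $g(c)=\prod_{i=1}^e(c-\beta_i)=(-1)^e\prod_{i=1}^e(\beta_i-c)$, we obtain the key identity $N(\beta-c)=(-1)^e g(c)=(-1)^e g(f(0))$. In case (i), applying $N$ gives $(-1)^e g(f(0))=N(w)^p$ with $N(w)\in K^\times$; since $(-1)^e$ is a $p$th power in $K$ when $p$ is odd (as $-1=(-1)^p$) and equals $\pm1$ when $p=2$, this yields $g(f(0))=\pm z^p$ for some $z\in K$. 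In case (ii), applying $N$ and using $N(-4)=(-4)^e$ gives $(-1)^e g(f(0))=(-4)^e N(w)^4$, i.e. $g(f(0))=4^e N(w)^4=(2^e N(w)^2)^2$, a square in $K$.

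Finally I would assemble the dichotomy. If $4\nmid d$ then case (ii) is impossible, so case (i) holds and $g(f(0))=\pm z^p$ for a prime $p\mid d$, which is conclusion (1). If $4\mid d$, then case (i) gives $g(f(0))=\pm z^p$, which has the shape $(-1)^{e_1}4^{e_2}z^m$ with $m=p>1$ dividing $d$ and $e_2=0$ (absorb the sign when $p$ is odd, take $e_1=1$ when $p=2$), while case (ii) gives a square, again of that shape with $m=2$; this is conclusion (2). I expect the only genuinely delicate point to be the sign- and $4$-bookkeeping in the norm descent --- which is also what accounts for the somewhat baroque shape $(-1)^{e_1}4^{e_2}z^m$ in the statement, even though the argument sketched here in fact shows one can always take $e_2=0$. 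Everything else is the routine tower computation together with a black-box appeal to Vahlen--Capelli.
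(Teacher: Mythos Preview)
The paper does not include a proof of this proposition; it simply cites \cite[Theorem~5.1]{PreperiodicPointsandABC}. Your argument is correct and follows what is in fact the standard route to results of this type: pass to $L=K(\beta)$ for $\beta$ a root of $g$, observe via the tower $[K(\gamma):K]=[K(\gamma):L]\cdot[L:K]$ that reducibility of $g\circ f$ over $K$ forces reducibility of $x^d-(\beta-c)$ over $L$, apply the Vahlen--Capelli criterion (which is precisely \cite[Theorem~9.1]{MR1878556}, used elsewhere in the paper), and then push the resulting pure-power relation from $L$ down to $K$ via $N_{L/K}$, using the identity $N_{L/K}(\beta-c)=(-1)^{\deg g}g(c)=(-1)^{\deg g}g(f(0))$. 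The sign bookkeeping in your norm descent is correct in each case, and your closing observation that the argument in fact always yields $e_2=0$ is valid: the shape $(-1)^{e_1}4^{e_2}z^m$ in the stated proposition is a little more generous than your proof requires.
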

To prove Theorem \ref{thm:eventualstability+numberfields+abc} we need a few more basic facts, including the following lower bound on the height of a point in the orbit of zero.   
\begin{lem}\label{lem:orbit0} Let $f(x)=x^d+c$ for some $c\in K$ with $h(c)>0$. Then $h(f^m(0))\geq h(c)$ for all $m\geq1$ whenever $d$ is sufficiently large depending on $K$.     
\end{lem}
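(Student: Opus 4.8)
The plan is to estimate $h(f^m(0))$ recursively using the standard height bounds for the map $x \mapsto x^d + c$. Recall that for any $\beta \in K$ one has $h(\beta^d + c) \geq d\, h(\beta) - h(c) - \log 2$, since $h(\beta^d) = d\, h(\beta)$ and the height of a sum satisfies $h(u + v) \geq h(u) - h(v) - \log 2$ (applied with $u = \beta^d$, $v = -c$). Applying this with $\beta = f^{m-1}(0)$ gives the recursion
\[
h(f^m(0)) \;\geq\; d\, h(f^{m-1}(0)) - h(c) - \log 2 \qquad (m \geq 2),
\]
with base case $h(f^1(0)) = h(c)$.

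First I would handle $m = 1$ trivially, then for $m \geq 2$ unwind the recursion: by induction one gets
\[
h(f^m(0)) \;\geq\; d^{m-1} h(c) - (h(c) + \log 2)\,(d^{m-2} + \cdots + d + 1) \;=\; d^{m-1} h(c) - (h(c)+\log 2)\,\frac{d^{m-1}-1}{d-1}.
\]
So it suffices to show $d^{m-1} h(c) - (h(c)+\log 2)\frac{d^{m-1}-1}{d-1} \geq h(c)$, i.e.
\[
\Big(d^{m-1} - 1 - \frac{d^{m-1}-1}{d-1}\Big) h(c) \;\geq\; (\log 2)\,\frac{d^{m-1}-1}{d-1}.
\]
Since $h(c) > 0$ is a fixed positive real and $d$ is allowed to grow, I would note that $\frac{1}{d-1} \to 0$, so for $d$ large enough (depending on $h(c)$, hence on $K$ if we only track the finitely many $c$ of bounded height — or more carefully, one should phrase the lemma's "sufficiently large depending on $K$" as depending also on $c$, but as written in the paper $c$ is fixed alongside $K$) the coefficient of $h(c)$ on the left is at least, say, $\tfrac12 d^{m-1}$, while the right-hand side is at most $(\log 2)\cdot\frac{d^{m-1}}{d-1}$; the desired inequality then reduces to $\tfrac12 h(c)(d-1) \geq \log 2$, which holds once $d \geq 1 + 2\log 2 / h(c)$.

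The only subtlety — and the step to be careful about — is the quantifier ordering: the bound must hold \emph{simultaneously for all $m \geq 1$}, so it is essential that the threshold on $d$ coming from the estimate above is independent of $m$, which it is since the factor $d^{m-1}$ can be cancelled from both sides before imposing the constraint. One should also double-check the edge case $m = 2$ separately (where the geometric sum has a single term) to make sure no off-by-one error creeps in, but this is routine. I do not expect any genuine obstacle here; the lemma is a soft consequence of functoriality of the Weil height under polynomial maps together with the fact that $h(c)$ is a fixed positive number while $d$ is a free parameter.
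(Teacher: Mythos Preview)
Your mechanical argument is fine and, once you cancel the common factor $d^{m-1}-1$ from both sides of your displayed inequality, it collapses cleanly to $(d-2)h(c)\geq\log 2$; the paper reaches the same condition by a direct one-step induction (assume $h(f^m(0))\geq h(c)$, then $h(f^{m+1}(0))\geq (d-1)h(c)-\log 2\geq h(c)$), which is a little more economical than unwinding the full recursion but entirely equivalent.

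The genuine gap is exactly the point you hedge on in your parenthetical. The lemma asserts a threshold on $d$ depending only on $K$, uniformly over all $c$ with $h(c)>0$, and this uniformity matters later: Theorem~\ref{thm:eventualstability+numberfields+abc} and Proposition~\ref{prop:stability} are meant to apply to all such $c$ with a single set of constants. Your threshold $d\geq 1+2\log 2/h(c)$ visibly depends on $c$, and neither of your proposed workarounds is correct: there is no restriction to finitely many $c$ (you seem to be invoking Northcott in the wrong direction, bounding heights from above rather than away from zero), and $c$ is not fixed once and for all in the paper. The missing step is the correct use of Northcott: over a number field $K$ the positive heights are bounded away from zero, so there is a constant $B_3(K)>0$ with $h(c)\geq B_3(K)$ whenever $h(c)>0$. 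Substituting this into your bound converts the $c$-dependent threshold into the $K$-only threshold $d>2+\log 2/B_3(K)$, which is precisely what the paper does.
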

\begin{proof} It follows from standard properties of heights that
\begin{equation*}
h(f(\alpha))=h(\alpha^d+c)\geq dh(\alpha)-h(c)-\log2
\end{equation*}
holds for all $\alpha\in \overline{K}$. Moreover, we note that 
\begin{equation*}
(d-1)h(c)-\log(2)\geq h(c)
\end{equation*}
holds for all $d\gg_K0$. Indeed, the above bound holds for all $d> \log(2)/B_3(K)+2$ where $B_3(K)$ is the minimum positive height on $K$ (which is guaranteed to exist by Northcott). In particular, we claim that $h(f^m(0))\geq h(c)$ for all $m\geq1$ and all $d$ sufficiently large. To see this, we proceed by induction; note that when $m=1$ we obtain $h(c)\geq h(c)$. On the other hand, assuming $h(f^m(0))\geq h(c)$ and $d>\log(2)/B_3(K)+2$ we see that 
\[h(f^{m+1}(0))=h(f^m(0)^d+c)\geq dh(f^m(0))-h(c)-\log(2)\geq (d-1)h(c)-\log(2)\geq h(c)\]
as desired.    
\end{proof}
In particular, we can immediately prove that in large degree, an irreducible polynomial $f=x^d+c$ is stable (meaning all of iterates are irreducible); compare to \cite[Proposition 5.3]{PreperiodicPointsandABC}.  
\begin{prop}\label{prop:stability} Let $K$ be an $abc$-field, let $\alpha\in K$, and let $f=x^d+c$ for some $c\in K$ with $h(c)>0$. Then for all $d\gg_{K,\alpha}0$, the polynomial $f(x)-\alpha$ is irreducible over $K$ if and only if $f^n(x)-\alpha$ is irreducible over $K$ for all $n\geq1$.   
\end{prop}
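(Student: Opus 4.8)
The plan is to dispatch the ``if'' direction trivially (specialize to $n=1$) and to prove the ``only if'' direction by induction on $n$, bootstrapping the irreducibility of $f^n(x)-\alpha$ to that of $f^{n+1}(x)-\alpha$ by combining the compositional-factor irreducibility test of Proposition~\ref{prop:irreducbility} with the Fermat--Catalan height bound of Proposition~\ref{prop:FermatCatalan}. First note that, since $f(x)-\alpha=x^d+(c-\alpha)$ is assumed irreducible and $d\geq 2$, we must have $c\neq\alpha$, so $c-\alpha\in K^*$; and irreducibility of $f(x)-\alpha$ is exactly the base case $n=1$.

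For the inductive step, assume $g:=f^n(x)-\alpha$ is monic and irreducible over $K$ and suppose, for contradiction, that $g\circ f=f^{n+1}(x)-\alpha$ is reducible. Since $\Char K=0$ and $d>1$, Proposition~\ref{prop:irreducbility} applies and produces either (when $4\nmid d$) a prime $p\mid d$ with $g(f(0))=\pm z^p$, or (when $4\mid d$) an identity $g(f(0))=(-1)^{e_1}4^{e_2}z^m$ with $e_1,e_2\in\{0,1\}$, $m\mid d$, $m>1$, in each case for some $z\in K$. In both cases $g(f(0))=f^{n+1}(0)-\alpha=f^n(0)^d+(c-\alpha)$, so putting $w:=f^n(0)\in K$ we obtain a Fermat--Catalan equation $w^d+b\,z^e=\alpha-c$ over $K$, where $e\geq 2$ (either $e=p$ or $e=m$), $b\in\{\pm 1,\pm 4\}\subset K^*$, and $\alpha-c\in K^*$. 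Taking $d$ large enough that $d\geq 5$ (so $\max\{d,e\}\geq 5$), Proposition~\ref{prop:FermatCatalan} yields $d\cdot h(w)\leq B_1(K)\bigl(h(\alpha-c)+\log 4\bigr)+B_2(K)$.

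The conclusion then follows by playing this upper bound against the orbit lower bound. For $d$ sufficiently large depending only on $K$, Lemma~\ref{lem:orbit0} gives $h(w)=h(f^n(0))\geq h(c)$; combining this with the elementary inequality $h(\alpha-c)\leq h(\alpha)+h(c)+\log 2$ transforms the previous estimate into $d\cdot h(c)\leq B_1(K)h(c)+C_4(K,\alpha)$ for a constant $C_4(K,\alpha)$ depending only on $K$ and $\alpha$. Since $h(c)>0$, Northcott's theorem gives a minimum positive height $B_3(K)$ on $K$, whence $(d-B_1(K))\,B_3(K)\leq C_4(K,\alpha)$, which is false once $d$ exceeds an explicit bound depending only on $K$ and $\alpha$. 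For such $d$ we reach a contradiction, so $f^{n+1}(x)-\alpha$ is irreducible, and the induction closes.

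The height manipulations and the two-case bookkeeping for Proposition~\ref{prop:irreducbility} are routine; the one genuine point is to recognize $f^{n+1}(0)-\alpha=f^n(0)^d+(c-\alpha)$ as a Fermat--Catalan equation whose solutions are forced by the $abc$-conjecture to have small height, while Lemma~\ref{lem:orbit0} simultaneously forces that height to be large --- the incompatibility being exactly what makes ``$d$ large'' (uniformly in $c$!) the right hypothesis. The main thing to be careful about is that the bound extracted from Proposition~\ref{prop:FermatCatalan} is truly uniform in $c$: this is why it matters that $h(\alpha-c)$ is dominated linearly by $h(c)$ with an $\alpha$-dependent rather than $c$-dependent additive constant, and why the case $4\mid d$ causes no trouble, the stray factor $4^{e_2}$ contributing only the bounded quantity $\log 4$ to the right-hand side.
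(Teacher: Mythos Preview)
Your proof is correct and follows essentially the same approach as the paper: both derive from Proposition~\ref{prop:irreducbility} a Fermat--Catalan relation $f^{n-1}(0)^d + (c-\alpha) = rz^m$ (in the paper's indexing), bound the left side below via Lemma~\ref{lem:orbit0} and above via Proposition~\ref{prop:FermatCatalan}, and conclude that $d$ is bounded in terms of $K$ and $\alpha$ alone. The only cosmetic difference is that the paper argues by taking the minimal $n$ with $f^n(x)-\alpha$ reducible, whereas you phrase the same step as an induction from $n$ to $n+1$.
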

\begin{proof} Assume that $h(c)>0$, that $d\geq5$ is sufficiently large so that Lemma \ref{lem:orbit0} holds, that $f(x)-\alpha$ is irreducible over $K$, and that $f^n(x)-\alpha$ is reducible over $K$ for some $n\geq2$. Moreover, we may assume that $n$ is the minimum such iterate. Then  Proposition \ref{prop:irreducbility} applied to the pair $g=f^{n-1}-\alpha$ and $f$ implies that $f^n(0)-\alpha=rz^m$ for some some $z\in K$, some $m\geq2$, and some $r\in K^*$ with $h(r)\leq\log(4)$. Let $X=f^{n-1}(0)$, so that $X^d+c-\alpha=rz^m$. Note that $c-\alpha\neq0$, since otherwise $f(x)-\alpha=x^d+c-\alpha=x^d$ is reducible, a contradiction. Hence, Proposition \ref{prop:FermatCatalan} and Lemma \ref{lem:orbit0} together imply that
\begin{equation*}
\begin{split}
dh(c)&\leq dh(X)\leq B_1(K)\max\{h(c-\alpha),\log(4)\}+B_2(K)\\[5pt]
&\leq B_1(K)(h(c)+h(\alpha)+\log4)+B_2(K)
\end{split} 
\end{equation*}
But then $d\leq B_1(K)(1+h(\alpha)/B_3(K)+(\log4)/B_3(K))+B_2/B_3(K)=B'(\alpha,K)$, and $d$ is bounded by a constant depending only on $\alpha$ and $K$ as claimed; here $B_3(K)$ once again denotes the minimum positive height of an element of $K$.    
\end{proof}
Next, we use the following result, which ensures that the absolute Galois group of a number field $K$ acts transitively on the primitive roots of unity of a given order.  
\begin{lem}\label{lem:full+Galois+group} Let $K$ be a number field with discriminant $\delta_K$ and let $\mu_m$ be a complete set of $m$th roots of unity in $\overline{K}$ for some $m>1$. If $\gcd(\delta_K,m)=1$, then $\Gal(K(\mu_m)/K)\cong(\mathbb{Z}/m\mathbb{Z})^*$.      
\end{lem}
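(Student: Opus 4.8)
The plan is to reduce to the classical computation $\Gal(\mathbb{Q}(\mu_m)/\mathbb{Q})\cong(\mathbb{Z}/m\mathbb{Z})^*$, and then use the hypothesis $\gcd(\delta_K,m)=1$ to force $\mathbb{Q}(\mu_m)\cap K=\mathbb{Q}$.

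First, since $\mathbb{Q}(\mu_m)/\mathbb{Q}$ is Galois, restriction of field automorphisms gives the standard isomorphism for Galois groups of composita,
\[
\Gal(K(\mu_m)/K)\;\xrightarrow{\ \sim\ }\;\Gal\big(\mathbb{Q}(\mu_m)/(\mathbb{Q}(\mu_m)\cap K)\big).
\]
Hence it suffices to show that $L:=\mathbb{Q}(\mu_m)\cap K$ equals $\mathbb{Q}$, for then the right-hand group is $\Gal(\mathbb{Q}(\mu_m)/\mathbb{Q})\cong(\mathbb{Z}/m\mathbb{Z})^*$.

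To prove $L=\mathbb{Q}$, I would check that $L/\mathbb{Q}$ is unramified at every rational prime and conclude $L=\mathbb{Q}$ by Minkowski's theorem. The ramified primes of $L/\mathbb{Q}$ are constrained in two ways. On the one hand $L\subseteq\mathbb{Q}(\mu_m)$, and the only primes ramifying in $\mathbb{Q}(\mu_m)/\mathbb{Q}$ are those dividing $m$ (for instance from the discriminant formula for cyclotomic fields); since a prime ramifying in a subextension also ramifies in the larger field, every prime ramifying in $L/\mathbb{Q}$ divides $m$. On the other hand $L\subseteq K$, and similarly every prime ramifying in $L/\mathbb{Q}$ must divide $\delta_K$ — e.g.\ by the tower formula $\delta_L^{[K:L]}\mid\delta_K$, or directly from multiplicativity of ramification indices in towers. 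Thus any such prime divides $\gcd(\delta_K,m)=1$, which is absurd; so $L/\mathbb{Q}$ is everywhere unramified, hence $L=\mathbb{Q}$.

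I do not expect a real obstacle, as this is a repackaging of standard algebraic number theory. The points that should be stated with a little care are that a prime ramifying in a subfield $F\subseteq N$ of number fields also ramifies in $N$ (pick a prime $\mathfrak{P}$ of $N$ over a ramified prime $\mathfrak{p}$ of $F$ and use $e(\mathfrak{P}/p)=e(\mathfrak{P}/\mathfrak{p})\,e(\mathfrak{p}/p)$), and the Minkowski input that $\mathbb{Q}$ admits no nontrivial everywhere-unramified extension.
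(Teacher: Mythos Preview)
Your proof is correct and is essentially identical to the paper's: both set $L=\mathbb{Q}(\mu_m)\cap K$, argue that any prime ramifying in $L/\mathbb{Q}$ would have to divide both $m$ and $\delta_K$, conclude $L/\mathbb{Q}$ is unramified and hence $L=\mathbb{Q}$, and then invoke the restriction isomorphism (what the paper calls the second isomorphism theorem) to get $\Gal(K(\mu_m)/K)\cong\Gal(\mathbb{Q}(\mu_m)/\mathbb{Q})\cong(\mathbb{Z}/m\mathbb{Z})^*$. If anything, you are a bit more explicit about the Minkowski input and the tower argument for ramification than the paper is.
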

\begin{proof} The case of $K=\mathbb{Q}$ follows from the irreducibility of the $m$th cyclotomic polynomial. Hence, may assume that $[K:\mathbb{Q}]\geq2$ and that $\gcd(\delta_K,m)=1$. Now let $L:=\mathbb{Q}(\mu_m)\cap K$ and assume that $p$ is a rational prime in $\mathbb{Q}$ that ramifies in $L$. Then $p$ must ramify in both $K$ and $\mathbb{Q}(\mu_m)$. But then $p|\delta_K$ and $p|m$, a contradiction. Hence, $L/\mathbb{Q}$ is an unramified extension so that $L=\mathbb{Q}$. It now follows from the second isomorphism theorem in Galois theory that 
\[\Gal(K(\zeta_m)/K)=\Gal(\mathbb{Q}(\zeta_m) K/K)\cong\Gal(\mathbb{Q}(\zeta_m)/L)=\Gal(\mathbb{Q}(\zeta_m)/\mathbb{Q})\cong(\mathbb{Z}/m\mathbb{Z})^*\]
as desired. 
\end{proof}
Lastly, we prove that the set of possible degrees $d$ for which Theorem \ref{thm:eventualstability+numberfields+abc} may be applied has positive lower asymptotic density. 
\begin{lem}\label{lem:good+d's+positive+density} Let $0<C_1<1$ and $C_2$ be constants. Then the set of positive integers $d$ satisfying both of the following conditions:
\[(1)\;\;\;\varphi(d)>C_1d\;\;\;\;\;\text{and}\;\;\;\;\;\;(2)\;\;\; p>C_2\;\;\text{for all primes $p|d$},\]
has positive lower asymptotic density. 
\end{lem}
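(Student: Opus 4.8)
The plan is to exhibit an explicit subset of positive lower density contained in the set of admissible $d$; since lower asymptotic density is monotone under inclusion, this suffices. The subset I would use consists of those $d$ all of whose prime factors exceed a large auxiliary threshold $y$ (to be chosen with $y\ge C_2$) and for which $\sum_{p\mid d}1/p$ is small. The first requirement is exactly condition (2), and smallness of $\sum_{p\mid d}1/p$ forces
\[
\log\frac{d}{\varphi(d)} \;=\; -\sum_{p\mid d}\log\!\Bigl(1-\tfrac1p\Bigr) \;=\; \sum_{p\mid d}\sum_{k\ge1}\frac{1}{kp^k} \;\le\; \sum_{p\mid d}\frac{1}{p-1} \;\le\; \frac{1}{1-1/y}\sum_{p\mid d}\frac1p ,
\]
hence $\varphi(d)/d$ to exceed $C_1$. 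Concretely I would fix $\delta:=\tfrac14\log(1/C_1)>0$ (legitimate as $0<C_1<1$) and, for $y>\max\{C_2,2\}$, set
\[
A_y:=\{\,d\ge1 : p\mid d\Rightarrow p>y\,\},\qquad G_y:=\Bigl\{\,d\in A_y : \textstyle\sum_{p\mid d}\tfrac1p<\delta\,\Bigr\}.
\]
By the displayed inequality, $d\in G_y$ implies $\varphi(d)/d>e^{-2\delta}>C_1$, so $G_y$ lies in the set of admissible $d$, and it remains to show $G_y$ has positive lower density for a good choice of $y$.

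Here the two standard inputs are: (i) the natural density of $A_y$ exists and equals $D(y):=\prod_{p\le y}(1-1/p)$ (inclusion--exclusion over the primes $\le y$, with a harmless $O_y(1)$ error term), and by Mertens' third theorem $D(y)\sim e^{-\gamma}/\log y$, so in particular $D(y)\gg 1/\log y$; and (ii) a first-moment (Markov-type) bound on the bad set,
\[
\#\Bigl\{d\le x : d\in A_y,\ \textstyle\sum_{p\mid d}\tfrac1p\ge\delta\Bigr\} \;\le\; \frac1\delta\sum_{\substack{d\le x\\ d\in A_y}}\sum_{p\mid d}\frac1p \;=\; \frac1\delta\sum_{p>y}\frac1p\#\{d\le x: d\in A_y,\ p\mid d\} \;\le\; \frac{x}{\delta}\sum_{p>y}\frac1{p^2} \;=\; O\!\Bigl(\frac{x}{\delta y}\Bigr),
\]
using $\#\{d\le x:p\mid d\}\le x/p$ and $\sum_{p>y}1/p^2=O(1/y)$. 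Subtracting (ii) from (i) shows the lower density of $G_y$ is at least $D(y)-O(1/(\delta y))$.

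The finish is then a comparison of orders of magnitude: since $D(y)\gg 1/\log y$ while $1/(\delta y)=o(1/\log y)$ for fixed $\delta$, taking $y$ sufficiently large (and $\ge\max\{C_2,2\}$) makes $D(y)-O(1/(\delta y))$ a positive constant, which bounds the lower density of $G_y$, and hence of the set in the lemma, from below. The only real obstacle is precisely this bookkeeping: one cannot simply take $y=C_2$, because when $C_1$ is close to $1$ the factor $1/\delta$ is large and there is no reason for $\sum_{p>C_2}1/p^2$ to be small compared with $\prod_{p\le C_2}(1-1/p)$; letting $y\to\infty$ is what renders the first-moment error negligible against the sieve density, exploiting that $\prod_{p\le y}(1-1/p)$ decays only logarithmically in $y$.
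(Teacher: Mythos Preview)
Your proof is correct and follows essentially the same approach as the paper: restrict to integers with all prime factors exceeding a large threshold, bound $\log(d/\varphi(d))$ by a constant times $\sum_{p\mid d}1/p$, and use a first-moment (Markov) argument to discard the $d$ with $\sum_{p\mid d}1/p$ large, comparing the resulting error against the sieve density via Mertens. The only cosmetic difference is that the paper lets the threshold on $\sum_{p\mid d}1/p$ vary with the sieve parameter (taking it to be $1/\sqrt{M}$), whereas you fix $\delta$ in terms of $C_1$ first and then let $y\to\infty$; both lead to the same conclusion.
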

\begin{proof} We study the function $\log(d/\varphi(d))$ rather than $\varphi(d)/d$. Indeed, if $d$ is such that $\varphi(d)/d$ is close to $1$, then $\log(d/\phi(d))$ is close to $0$ and vice versa. Moreover, the function $\log(d/\varphi(d))$ is additive, and for each prime power $d=p^k$, we have that
\[\log\Big(\frac{p^k}{\varphi(p^k)}\Big) = -\log\Big(1-\frac{1}{p}\Big) <\frac{2}{p}.\]
In particular, we compute that  
\begin{equation}\label{eq:upper}
\log\Big(\frac{d}{\varphi(d)}\Big) < 2 \sum_{p | d} \frac{1}{p}.
\end{equation}
So it suffices to produce many $d$ with only large prime factors such that $\sum_{p|d} 1/p$ is small. Moreover, since $\log(d/\phi(d))\geq0$, this can be achieved by a first moment counting argument. 

Fix a large positive integer $M$ and let $S$ be the set of all positive integers whose prime factors are all greater than $M$. Then it is well known that $S$ has positive density given by
\[c_M = \prod_{p\leq M} \Big(1-\frac{1}{p}\Big).\]
On the other hand, for all $x>0$ we see that \vspace{.15cm} 
\begin{equation*}
\sum_{\substack{d\in S\\ d \leq x}} \sum_{p | d} \frac{1}{p}\leq\sum_{d\leq x} \sum_{\substack{p | d\\ p > M}} \frac{1}{p} \leq \sum_{M < p\leq x} \frac{1}{p} \sum_{\substack{d\leq x\\ p | d}} 1\leq x \sum_{M < p\leq x} \frac{1}{p^2} <x \sum_{m > M} \frac{1}{m(m-1)} = \frac{x}{M}.\vspace{.15cm} 
\end{equation*}
Hence, we may conclude that  
\[\#\bigg\{d\in S\;:\; d\leq x\;\;\;\text{and}\;\;\sum_{p|d} \frac{1}{p} > \frac{1}{\sqrt{M}}\bigg\}\leq\frac{x}{\sqrt{M}}\]
Next we compare $c_M$ to $1/\sqrt{M}$. Mertens' third theorem states that $c_M$ approaches $e^{-\gamma}/\log M$ as $M\rightarrow\infty$, where $\gamma$ is the Euler–Mascheroni constant. In particular, $c_M>2/\sqrt{M}$ holds for all $M$ sufficiently large. We conclude that if $M$ is fixed and sufficiently large, then for all large $x$, there are at least $(1/2)c_Mx$ numbers $d\leq x$ with $d\in S$ and $\sum_{p | d} 1/p\leq 1/\sqrt{M}$. Moreover, for the $d$ satisfying this last inequality, we have from \eqref{eq:upper} that  
\[\varphi(d)/d > \exp(-2/\sqrt{M}).\]
In particular, by choosing any $M>C_2$ sufficiently large so that $\exp(-2/\sqrt{M})>C_1$, we see that the density of those $d$ satisfying both (1) and (2) in Lemma \ref{lem:good+d's+positive+density} is positive as claimed.  
\end{proof}
We now have all of the tools in place to prove our main result. 
\begin{proof}[(Proof of Theorem \ref{thm:eventualstability+numberfields+abc})] Let $K$ be a number field over which the $abc$-conjecture holds, and let $\delta_K$ and $\mu_K$ denote the discriminant and the set of roots of unity of $K$ respectively. Moreover, fix $\alpha\in K$ and let $f(x)=x^d+c$ for some $c\in K$. Furthermore, we assume that $\alpha$ is not a fixed point of $f$, that $d\gg_K0$ so that Lemma \ref{lem:orbit0} holds, that $\gcd\big(d,6\cdot\delta_K\cdot|\mu_K|\big)=1$, and that $\min\{h(c),h(c-\alpha)\}>0$. Now define 
\[m:=\max\big\{m\,: m|d\;\;\text{and}\;\; c-\alpha=-y^m\;\text{for some}\; y\in K\big\}.\]
Note that if $m=1$, then $f-\alpha$ is irreducible over $K$ by \cite[Theorem 9.1]{MR1878556}. In this case, $f^n-\alpha$ is irreducible over $K$ for all $n\geq1$ and all $d\gg_{K,\alpha}0$ by Proposition \ref{prop:stability}, without further assumption on $\varphi(d)$. In particular, we may assume that $m>1$. Then we obtain the factorization  
\[f(x)-\alpha=x^d+c-\alpha=x^d-y^m=\prod_{\zeta\in\mu_{m}}(x^r-\zeta y)\]
of $f$ over $K(\mu_m)$, where $r=d/m$ and $\mu_{m}$ denotes the set of all $m$th roots of unity in $\overline{K}$. Note next that each polynomial $g_\zeta(x):=x^{r}-\zeta y$ must be irreducible over $K(\zeta)$. If not, then $r>1$ and so \cite[Theorem 9.1]{MR1878556} implies that $\zeta y=z^p$ for some $z\in K(\zeta)$ and some prime $p|r$. But since $\zeta\in\mu_m$ is also a $p$th power in $K(\mu_{mp})$, we see that $y$ must be a $p$th power in $K(\mu_{mp})$. Hence, $K\subseteq K(\sqrt[p]{y})\subseteq K(\mu_{mp})$ and so $K(\sqrt[p]{y})/K$ must be an Abelian extension of $K$. Thus, \cite[Theorem 2]{schinzel1977abelian} implies that $y=u^p$ for some $u\in K$; here we use that $d$ (and so also $p$) is coprime to $|\mu_K|$. But then $c-\alpha=-y^m=-u^{pm}$, which contradicts the definition of $m$; note that $pm$ divides $d$ since $pt=r=d/m$ for some $t\in\mathbb{Z}$. Therefore, the polynomials $g_\zeta$ are irreducible over $K(\zeta)$ for all $\zeta\in\mu_m$ as claimed. 

Similarly, we wish to show that $g_\zeta\circ f^n$ is irreducible over $K(\zeta)$ for all $\zeta\in\mu_m$ and all $n\geq1$. If not, then repeated application of Proposition \ref{prop:irreducbility} implies that 
\begin{equation}\label{eq:pthpower}
f^n(0)^r-\zeta y=g_\zeta\circ f^n(0)=z^p
\end{equation} 
for some $n\geq1$, some $\zeta\in\mu_m$, some $p|d$, and some $z\in K(\zeta)$. Moreover, we note that $p\geq5$ by our assumption that $\gcd(d,6)=1$. From here, we proceed in cases to show that either condition (1) or (2) of Theorem \ref{thm:eventualstability+numberfields+abc} must fail. In what follows, we recall that $r=d/m$. \\[5pt]
\textbf{Case (1):} Suppose that $\zeta=1$ and that $r=1$. Now when $n=1$, we see from \eqref{eq:pthpower} that 
\[-y^d-(y-\alpha)=-y^m-(y-\alpha)=(c-\alpha)-(y-\alpha)=c-y=f(0)-y=g_\zeta\circ f(0)=z^p.\]
On the other hand, if $y-\alpha=0$, then $c-\alpha=-y^m=-y^d=-\alpha^d$ and so $\alpha$ is a fixed point of $f$, a contradiction. In particular, Proposition \ref{prop:FermatCatalan} applied to the equation $-(y-\alpha)=z^p+y^d$ implies that $dh(y)\leq B_1(K)h(y-\alpha)+B_2(K)$ for some $B_1(K)$ and $B_2(K)$ depending on $K$. Moreover, we note that $h(y)>0$, since otherwise $h(c-\alpha)=h(-y^d)=dh(y)=0$, and we again obtain a contradiction. Then Northcott's theorem implies that $h(y)\geq B_3(K)>0$ for some constant $B_3(K)$ depending only on the degree of $K$. Hence, \[d\leq B_1(K)(1+h(\alpha)/B_3(K)+\log2/B_3(K))+B_2(K)/B_3(K),\] 
so that $d$ is bounded by a constant depending only on $K$ and $\alpha$. Likewise, when $n\geq2$ then \eqref{eq:pthpower} implies that 
\[f^{n-1}(0)^d+c-y=f(f^{n-1}(0))-y=f^n(0)-y=g_\zeta\circ f^n(0)=z^p.\] 
Therefore, if $c-y\neq 0$, then Proposition \ref{prop:FermatCatalan} and Lemma \ref{lem:orbit0} together imply that \vspace{.1cm}  
\begin{equation*}
\begin{split}
d h(c)\leq dh(f^{n-1}(0))&\leq B_1(K)h(c-y)+B_2(K)\\[5pt] 
&\leq B_1(K)\Big(h(c)+h(y)+\log2\Big)+B_2(K)\\[5pt] 
&\leq B_1(K)\Big(h(c)+dh(y)+\log2\Big)+B_2(K)\\[5pt] 
&\leq B_1(K)\Big(h(c)+h(c-\alpha)+\log2\Big)+B_2(K)\\[5pt]
&\leq B_1(K)\Big(2 h(c)+h(\alpha)+\log4\Big)+B_2(K)
\end{split} 
\end{equation*}
In particular, $d\leq B_1(K)(2+h(\alpha)/B_3(K)+\log4/B_3(K))+B_2(K)/B_3(K)$, so that $d$ is bounded by a constant depending $K$ and $\alpha$. Finally, if $c-y=0$, then $c-\alpha=-y^d=-c^d$. Thus $\alpha=c^d+c$ so that \[h(\alpha)=h(c^d+c)\geq h(c^d)-h(c)-\log2= (d-1)h(c)-\log2> (d-1)B_3(K)-\log2,\] 
and $d$ is once again bounded by a constant depending on $K$ and $\alpha$. In particular, if we assume that every prime factor of $d$ is sufficiently large depending on $K$ and $\alpha$ (forcing $d$ to be large also), then it is not possible for \eqref{eq:pthpower} to hold in this case. \\[5pt]  
\textbf{Case (2):} Suppose that $\zeta= 1$ and $r>1$. Then, if we set $X=f^n(0)$, we have $X^r-y=z^p$ by \eqref{eq:pthpower}. Moreover, $h(y)>0$, since otherwise $h(c-\alpha)=h(-y^m)=0$, a contradicition. Thus, Proposition \ref{prop:FermatCatalan} and Lemma \ref{lem:orbit0} together imply that 
\begin{equation*}
\begin{split} 
r h(c)\leq rh(X)&\leq B_1(K)h(y)+B_2(K) \\[5pt] 
&\leq B_1(K)h(-y^m)+B_2(K) \\[5pt] 
&\leq B_1(K)(h(c)+h(\alpha)+\log2)+B_2(K).
\end{split}
\end{equation*}
Hence, $r\leq B_1(K)(1+h(\alpha)/B_3(K)+\log2/B_3(K))+B_2(K)/B_3(K)$ is bounded by a constant depending on $K$ and $\alpha$. In particular, if we assume that every prime factor of $d$ is sufficiently large depending on $K$ and $\alpha$ (forcing $r$ to be large also), then it is not possible for \eqref{eq:pthpower} to hold in this case as well.       
\\[5pt] 
\textbf{Case (3):} Suppose that $\zeta\neq1$. Then $\zeta$ is a primitive $a$th root of unity for some divisor $a|m$ with $a>1$. Hence, $\Gal(K(\zeta)/K)=\Gal(K(\mu_a)/K)\cong (\mathbb{Z}/a\mathbb{Z})^*$ by Lemma \ref{lem:full+Galois+group} and our assumption that $d$ (and so $a$ also) is coprime to $\delta_K$. Now, let $X=f^n(0)$, so that $X^r-\zeta y=z^p$. Then applying any $\sigma\in\Gal(K(\zeta)/K)$ to the equation $X^r-\zeta y=z^p$ we see that $X^r-\omega y$ is also a $p$th power in $K(\zeta)$ for any primitive $a$th root of unity $\omega$ (since $\Gal(K(\zeta)/K)$ acts transitively on the primitive $a$th roots of unity by assumption). Hence, we deduce that 
\begin{equation*}
\begin{split} 
X^{ra}-y^a&=\prod_{\omega\in \mu_a}(X^r-\omega y)\\[3pt] 
&=\prod_{\substack{\omega\in\mu_a\\ o(\omega)<a}}X^r-\omega y\cdot \prod_{\substack{\omega\in\mu_a\\ o(\omega)=a}}X^r-\omega y\\[3pt] 
&= \bigg(\prod_{\substack{\omega\in\mu_a\\ o(\omega)<a}}X^r-\omega y\bigg)\cdot \bigg(\prod_{\sigma\in \Gal(K(\zeta)/K)}\hspace{-.5cm}\sigma(z)\bigg)^p\\[3pt] 
&=G(X,y)\cdot u^p 
\end{split} 
\end{equation*}
for some $u\in K$, where $o(\omega)$ denotes the multiplicative order of a root of unity $\omega$; here $G(X,y)\in K$ is the product of $X^r-\omega y$ over all $\omega\in\mu_a$ with $o(\omega)<a$. In particular, since $y\neq0$ as $h(c-\alpha)>0$ by assumption and $c-\alpha=-y^m$ by defintion of $y$ and $m$, it follows from Proposition \ref{prop:FermatCatalan} and Lemma \ref{lem:orbit0} that \vspace{.1cm} 
\begin{equation}\label{eq:numberfield1}
\begin{split}
rah(X)&\leq B_1(K)\max\{h(G(X,y)),ah(y)\}+B_2(K)\\[5pt] 
&\leq B_1(K)\max\{h(G(X,y)),m h(y)\}+B_2(K)\\[5pt] 
&= B_1(K)\max\{h(G(X,y)),h(c-\alpha)\}+B_2(K)\\[3pt] 
\end{split} 
\end{equation} 
On the other hand, $h(X^r-\omega y)\leq rh(X)+h(y)+\log(2)$ for all $\omega$, so that \vspace{.1cm}
\begin{equation}\label{eq:numberfield2}
\begin{split} 
h(G(X,y))&\leq (a-\varphi(a))\Big(rh(X)+h(y)+\log2\Big)\\[5pt]
& \leq (a-\varphi(a))\Big(rh(X)+\frac{1}{m}h(c-\alpha)+\log2\Big)\\[5pt]
&\leq (a-\varphi(a))\Big(rh(X)+h(c-\alpha)+\log2\Big)\\[3pt] 
\end{split} 
\end{equation} 
where $\varphi(a)=|(\mathbb{Z}/a\mathbb{Z})^*|$. Hence, combining \eqref{eq:numberfield1}, \eqref{eq:numberfield2}, and Lemma \ref{lem:orbit0}, we see that \vspace{.1cm} 
\begin{equation}\label{eq:numberfield3}
\begin{split} 
ra h(X)&\leq B_1(K) (a-\varphi(a))\Big(rh(X)+h(c-\alpha)+\log2\Big)+B_2(K)\\[5pt] 
&\leq B_1(K)(a-\varphi(a))\Big(rh(X)+h(c)+h(\alpha)+\log4\Big)+B_2(K)\\[5pt]
&\leq B_1(K)(a-\varphi(a))\Big(2rh(X)+\frac{h(\alpha)+\log4}{B_3(K)}rh(X)\Big)+B_2(K)\\[5pt]
&=B_1(K)(a-\varphi(a))\Big(2+\frac{h(\alpha)+\log4}{B_3(K)}\Big)rh(X)+B_2(K)\\[5pt]
&=B_4(K,\alpha)(a-\varphi(a))rh(X)+B_2(K)\\[3pt] 
\end{split} 
\end{equation}
for some constant $B_4(K,\alpha)\geq1$ depending on both $K$ and $\alpha$. Now suppose that $d$ satisfies both of the following conditions:\vspace{.1cm}  
\begin{equation}\label{eq:numberfield4}
\textup{(I)}\,\;\;\;\varphi(d)>\Bigg(\frac{2B_4(K,\alpha)-1}{2B_4(K,\alpha)}\Bigg)d\;\;\;\;\;\text{and}\;\;\;\;\; \textup{(II)}\,\;\;\; p>\frac{3B_2(K)}{B_3(K)}\; \text{for all primes $p|d$.}
\end{equation}
Then since $a|d$ and $a>1$, we have that (I) holds when $d$ is replaced with $a$ and (II) holds when $p$ is replaced with $a$. In particular, \eqref{eq:numberfield3} and \eqref{eq:numberfield4} together imply that
\vspace{.1cm} 
\[
1\leq B_4(K,\alpha)\bigg(\frac{a-\varphi(a)}{a}\bigg)+\frac{B_2(K)}{rah(X)}\leq \frac{1}{2}+\frac{B_2(K)}{a B_3(K)}\leq \frac{1}{2}+\frac{1}{3},
\vspace{.1cm} 
\]
and we obtain a contradiction.

To summarize: there are constants $C_1(K,\alpha)$ and $C_2(K,\alpha)$ depending only on $K$ and $\alpha$ such that if conditions (1)-(4) of Theorem \ref{thm:eventualstability+numberfields+abc} are all satisfied, then $g_\zeta\circ f^n$ is irreducible in $K(\zeta)[x]$ for all $\zeta\in\mu_m$ and all $n\geq1$; in fact, we showed  the stronger fact that \textbf{$g_\zeta\circ f^n(0)$ cannot be a $p$th power in $K(\zeta)$ for any prime $p|d$}. Finally, for any divisor $a|d$ we let 
\begin{equation}\label{eq:overK}
g_a(x):=\prod_{o(\zeta)=a}g_\zeta(x)=\prod_{o(\zeta)=a}(x^r-\zeta y)\in K[x].
\end{equation}
Here, $g_a$ has coefficients in $K$ since any $\sigma\in Gal(\overline{K}/K)$ preserves the order of a root of unity and $y\in K$. We claim that $g_a\circ f^n\in K[x]$ is irreducible for all $a|m$ and all $n\geq0$. To see this, fix $a>1$ and let $\beta_1,\beta_2\in\overline{K}$ be any two roots of $g_a\circ f^n$; the case when $a=1$ follows apriori. In particular, it follows from the factorization in \eqref{eq:overK} that there are $\zeta_1,\zeta_2$ with $o(\zeta_i)=a$ such that $\beta_1$ and $\beta_2$ are roots of $g_{\zeta_1}\circ f^n$ and $g_{\zeta_2}\circ f^n$ respectively. Now since $\Gal(K(\mu_a)/K)\cong(\mathbb{Z}/a\mathbb{Z})^*$ there exists $\sigma\in\Gal(\overline{K}/K)$ such that $\sigma(\zeta_1)=\zeta_2$. Moreover, 
\[0=\sigma(0)=\sigma(g_{\zeta_1}(f^n(\beta_1)))=g_{\sigma(\zeta_1)}(f^n(\sigma(\beta_1)))=g_{\zeta_1}(f^n(\sigma(\beta_1))).\]
Hence, $\sigma(\beta_1)$ is a root of $g_{\zeta_2}\circ f^n$, an irreducible polynomial over $K(\mu_a)$. Thus there exists $\rho\in\Gal(\overline{K}/K(\mu_a))\subseteq \Gal(\overline{K}/K)$ such that $\rho(\sigma(\beta_1))=\beta_2$. Therefore, $\Gal(\overline{K}/K)$ acts transitively on the roots of $g_a\circ f^n$, and so it is irreducible over $K$ as claimed. In particular, we deduce that 
\[f^{n}(x)-\alpha=\prod_{a|m}g_a\circ f^{n-1}(x)\]
is a complete factorization into irreducible polynomials in $K[x]$ for all $n\geq1$. Moreover, it follows that $f^n$ has at most $\tau(m)\leq\tau(d)$ factors over $K$ for all $n\geq1$. 
\end{proof}
Next, we justify Remark \ref{rem:non-prime+powers}, that Theorem \ref{thm:eventualstability+numberfields+abc} applies to many non-prime powered degrees. 
\begin{lem}\label{lem:primefactors} Let $t\geq1$, let $0<\epsilon<1$, and let $d$ be an integer with at most $t$ distinct prime factors. Then exists a constant $M(t,\epsilon)$ depending only on $t$ and $\epsilon$ such that if every prime divisor $p|d$ satisfies $p> M(t,\epsilon)$, then $\varphi(d)>\epsilon d$.     
\end{lem}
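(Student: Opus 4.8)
The plan is to use the multiplicativity of $\varphi$ in the form $\varphi(d)/d=\prod_{p\mid d}(1-1/p)$, combined with the hypothesis that $d$ has at most $t$ distinct prime divisors, each larger than the constant we are free to choose.

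First I would write $d=p_1^{e_1}\cdots p_s^{e_s}$ with $p_1<\cdots<p_s$ distinct primes and $s\leq t$, so that $\varphi(d)/d=\prod_{i=1}^{s}(1-1/p_i)$. Since the function $x\mapsto 1-1/x$ is increasing on $(0,\infty)$ and each $p_i>M(t,\epsilon)$, every factor satisfies $1-1/p_i>1-1/M(t,\epsilon)$; as each factor lies in $(0,1)$ and there are $s\leq t$ of them, we obtain the uniform lower bound $\varphi(d)/d>\bigl(1-1/M(t,\epsilon)\bigr)^{t}$.

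It then suffices to choose $M(t,\epsilon)$ so that $\bigl(1-1/M(t,\epsilon)\bigr)^{t}\geq\epsilon$. Solving the inequality, any $M(t,\epsilon)>\bigl(1-\epsilon^{1/t}\bigr)^{-1}$ works, so one may take, for instance, $M(t,\epsilon)=\bigl\lceil(1-\epsilon^{1/t})^{-1}\bigr\rceil+1$, which depends only on $t$ and $\epsilon$. With this choice, $\varphi(d)/d>\bigl(1-1/M(t,\epsilon)\bigr)^{t}>\epsilon$, as claimed.

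There is no genuine obstacle in this argument; the only point worth emphasizing is why the bound is uniform in $d$. This uses the restriction to at most $t$ prime factors in an essential way: without it, $\varphi(d)/d$ can be driven arbitrarily close to $0$ even when every prime dividing $d$ is large (take $d$ to be a product of many large primes), since $\prod_{M<p\leq N}(1-1/p)\to 0$ as $N\to\infty$ by Mertens' theorem. Thus bounding the number of prime factors is precisely what converts the pointwise estimate into a constant depending only on $t$ and $\epsilon$.
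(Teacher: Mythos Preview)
Your proof is correct and follows essentially the same route as the paper: both use the product formula $\varphi(d)/d=\prod_{p\mid d}(1-1/p)$, bound each factor below by $1-1/M$, and then choose $M$ so that $(1-1/M)^t\geq\epsilon$. The only difference is cosmetic: the paper invokes Bernoulli's inequality $(1-1/M)^t\geq 1-t/M$ to take the explicit constant $M=t/(1-\epsilon)$, whereas you solve $(1-1/M)^t\geq\epsilon$ directly to obtain $M>(1-\epsilon^{1/t})^{-1}$, which is in fact the sharper choice.
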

\begin{proof} Fix $t\geq1$ and $0<\epsilon<1$ and let $d=p_1^{e_1}\cdot p_t^{e_t}$ be a factorization of $d$ into primes. Moreover, assume that $p_1,\dots,p_t>M$ where $M:=t/(1-\epsilon)$. Then it follows from Bernoulli's inequality, $(1+x)^t\geq 1+tx$ for all $x\geq-1$, applied to $x=-1/M$ that   
\[\frac{\varphi(d)}{d}=\Big(1-\frac{1}{p_1}\Big)\cdots\Big(1-\frac{1}{p_t}\Big)\geq(1-1/M)^t\geq1-t\Big(\frac{1}{M}\Big)=\epsilon \]
as desired. 
\end{proof}
Before we apply Theorem \ref{thm:eventualstability+numberfields+abc} to compute the density of prime divisors for certain dynamical orbits, we need the following fact: 
\begin{rem}\label{rem:pth+power} Let $K$ be a field of characteristic zero, let $y\in K$, and let $p$ be an odd prime. Then we note that $y$ is a $p$th power in $K$ if and only if it is a $p$th power in $K(\mu_p)$. Suppose for a contradiction that $y$ is not a $p$th power in $K$ and that $y=z^p$ for some $z\in K(\mu_p)$. In particular, \cite[Theorem 9.1]{MR1878556} implies that $x^p-y$ is irreducible over $K$ so that $[K(z):K]=p$. On the other hand,  $K(z)\subseteq K(\zeta_p)$ so that $p=[K(z):K]\leq [K(\zeta_p):K]\leq p-1$, a contradiction.       
\end{rem}
\begin{proof}[(Proof of Corollary \ref{cor:density+roots+unity})]
Assume that the $abc$-conjecture holds over $K$ and that $p$ is a prime. Then when $f=x^p+c$ is irredcuible over $K$ and $h(c)>0$, it follows form the proof of Proposition \ref{prop:stability} that $f^n(0)$ is not a $p$th power in $K$ for all $p\gg_K0$. Hence, $f^n(0)$ is not a $p$th power in $K(\zeta_p)$ for all $n\geq1$ by Remark \ref{rem:pth+power}. In particular, it follows from \cite[Theorem 1.1]{MR3335237} that $\mathcal{P}\big(f,K(\zeta_p),\beta\big)$ has natural density zero for all $\beta\in K(\zeta_p)$ as claimed. Likewise when $f$ is reducible over $K$, then it follows from the proof of Theorem \ref{thm:eventualstability+numberfields+abc} that $f=g_1\cdots g_p$ for some linear $g_i\in K(\zeta_p)[x]$ such that $g_i(f^m(0))$ is not a $p$th power in $K(\zeta_p)$ for all $1\leq i\leq p$ and all $m\geq1$; here we use Remark \ref{rem:pth+power} in the case when $g(x)=x-y$ for $y\in K$ with $c=-y^p$. It therefore follows from \cite[Theorem 1.1]{MR3335237} that $\mathcal{P}\big(f,K(\zeta_p),\beta\big)$ has Dirichlet density zero for all $\beta\in K(\zeta_p)$ as claimed.    
\end{proof}
\begin{proof}[(Proof of Corollary \ref{cor:density+Q})] We argue as in the paragraph proceeding \cite[Corollary 1.3]{MR3335237}. First we recall that a rational prime $q$ splits completely in $\mathbb{Q}(\zeta_p)$ if and only if $q\equiv 1\Mod{p}$. Second, we note that the set of primes $\mathfrak{q}$ of $\mathbb{Q}(\zeta_p)$ that lie over a rational prime $q$ which splits completely in $\mathbb{Q}(\zeta_p)$ has full density, since such a prime necessarily has norm $q$ while the norm of any of other prime over $q'$ is at least $(q')^2$. In particular, if $T$ denotes the set of primes of $\mathbb{Q}(\zeta_p)$ that lie over a prime $q\equiv1\Mod{p}$ and $c\in\mathbb{Q}\setminus\{0,\pm{1}\}$ and $b\in\mathbb{Q}$, then it follows from Corollary \ref{cor:density+roots+unity} that \vspace{.1cm} 
\begin{equation*}
\begin{split}
0&=\limsup_{X\rightarrow\infty}\frac{\#\Big\{\mathfrak{q}\in T\,:\, v_{\mathfrak{q}}(f^n(b))>0\;\text{for some $n\geq0$}\; \text{and}\; N(\mathfrak{q})\leq X\Big\}}{\#\Big\{\mathfrak{q}\in T\,:\,N(\mathfrak{q})\leq X\Big\}}\\[5pt]
&=\limsup_{X\rightarrow\infty}\frac{(p-1)\cdot\#\Big\{q\equiv1\Mod{p}\,:\, v_q(f^n(b))>0\;\text{for some $n\geq0$}\; \text{and}\; q\leq X\Big\}}{(p-1)\cdot \#\Big\{q\equiv1\Mod{p}\,:\,q\leq X\Big\}}\\[5pt]
0&=\limsup_{X\rightarrow\infty}\frac{\#\Big\{q\equiv1\Mod{p}\,:\, v_q(f^n(b))>0\;\text{for some $n\geq0$}\; \text{and}\; q\leq X\Big\}}{\#\Big\{q\equiv1\Mod{p}\,:\,q\leq X\Big\}}\\[2pt] 
\end{split} 
\end{equation*}
holds for all $p$ sufficiently large as claimed. Similarly, since the excluded values of $c$ are all integral, one may apply \cite[Corollary 1.3]{MR3335237} directly in these cases to reach the same conclusion. 
\end{proof}
\begin{proof}[(Proof of Corollary \ref{cor:density+orbit+0})] We argue as in \cite[Corollary 1.4]{MR3335237} and sieve prime divisors of orbits by their congruence modulo $p$. Assume that $c\neq0$ and $c\neq-1$ if $p=2$ and let $q$ be a prime number not dividing the denominator of $c$. If $q\not\equiv 1\Mod{p}$, then $f(x)=x^p+c$ acts as a permutation on $\mathbb{F}_q$. Hence, every element $b\in\mathbb{F}_q$ has a \emph{periodic orbit} under $f$. In particular, this is true of $b=0$. Hence, there exists an $n_q\geq1$ such that $f^{n_q}(0)\equiv0\Mod{q}$. Moreover, we note that $f^{n_q}(0)\neq0$ since we have excluded $c=0$ and $c=-1$ when $p=2$, the only cases where $0$ is periodic for $x^p+c$ over $\mathbb{Q}$. In particular, we deduce that $q\in\mathcal{P}(f,\mathbb{Q},0)$. In short, we have shown that for all but finitely many $q\not\equiv1\Mod{p}$, it must be the case that $q\in\mathcal{P}(f,\mathbb{Q},0)$. In particular, it follows that $\mathcal{D}(f,\mathbb{Q},0)\geq (p-2)/(p-1)$. On the other hand, Corollary \ref{cor:density+Q} implies that $\mathcal{D}(f,\mathbb{Q},0)\leq (p-2)/(p-1)$ when $p$ is sufficiently large, and the claim follows.            
\end{proof}
\begin{proof}[(Proof of Corollary \ref{cor:integers+backwards+orbits})] Under the conditions of Theorem \ref{thm:eventualstability+numberfields+abc}, we have that the pair $(f,\alpha)$ is eventually stable, meaning that the number of factors of the polynomial $f^n(x)-\alpha$ is bounded independently of $n$. Hence \cite[Theorems 2.5 and 2.6]{sookdeo2011integer} imply that $O_f^{-}(\alpha)\cap\mathcal{O}_{S,\gamma}$ is finite for all non-preperiodic $\gamma\in K$; see also \cite[Theorem 3.1]{jones2017eventually}.    
\end{proof}
\section{Function Fields}
In what follows, $k$ is a field of characteristic zero, $t$ is a transcendental over $k$, and $K/k(t)$ is a finite extension; equivalently, $K$ is the function field of a curve over $k$. Moreover, we will assume that the constant field $k$ is algebraically closed, since any bound on the number of irreducible factors of $f^n(x)-\alpha$ in $K[x]$ in this case gives an upper bound on the number of irreducible factors in $K[x]$ when $k$ is not necessarily closed.
\begin{rem} The proof of Theorem \ref{thm:functionFields} follows that of Theorem \ref{thm:eventualstability+numberfields+abc} very closely, except that Case (3) in the proof of Theorem \ref{thm:eventualstability+numberfields+abc} is not needed; moreover, this is the main reason that a stronger statement is possible in the function field case (i.e., with fewer stipulations on $d$).      
\end{rem}
\begin{proof}[(Proof of Theorem \ref{thm:functionFields})] Let $\alpha\in K$ and let $f(x)=x^d+c$ for some $d\geq2$ and some $c\in K$. Moreover, assume that $\alpha$ is not a fixed point of $f$, that $d$ is not divisible by $2$ or $3$ and that $d\gg_K0$ so that Lemma \ref{lem:orbit0} holds, and that $\min\{h(c),h(c-\alpha)\}>0$; equivalently $c$ and $c-\alpha$ are nonconstant functions. Now define
\[m:=\max\Big\{m\,: m|d\;\;\text{and}\;\; c-\alpha=-y^m\;\text{for some}\; y\in K\Big\}.\]
Note that if $m=1$, then $f$ is irreducible over $K$ by \cite[Theorem 9.1]{MR1878556}. In this case, $f^n-\alpha$ is irreducible over $K$ for all $n\geq1$ and all $d\gg_{K,\alpha}0$ by Proposition \ref{prop:stability}. In particular, we may assume that $m>1$. Now, since $k$ contains a complete set of roots of unity, we obtain the factorization  
\[f(x)-\alpha=x^d+c-\alpha=x^d-y^m=\prod_{\zeta\in\mu_{m}}(x^r-\zeta y)\]
of $f$ in $K[x]$, where $r=d/m$ and $\mu_{m}$ denotes the $m$th roots of unity in $K$. Note next that each polynomial $g_\zeta(x):=x^{r}-\zeta y$ must be irreducible in $K[x]$ for all $\zeta\in\mu_{m}$. If not, then $r>1$ and so \cite[Theorem 9.1]{MR1878556} implies that $\zeta y=z^p$ for some prime $p|r$ and some $z\in K$. Then, since $k$ is algebraically closed, $y=Y^p$ for some $Y\in K$. But then if we set $m'=pm$, we see that $c=-Y^{m'}$, that $m'>m$, and that $m'|d$, which contradicts our definition of $m$. Likewise, we claim that $g_\zeta\circ f^n(x)$ is irreducible in $K[x]$ for all $n\geq1$. If not, then \cite[Proposition 2.3]{PreperiodicPointsandABC} implies that $g_\zeta\circ f^n(0)=z^p$ for some $n\geq1$, some prime $p|d$, and some $z\in K$. Then setting $X=f^n(0)$, we have that $X^{r}-\zeta y=z^p$. From here we proceed in cases to show that the prime factors of $d$ are bounded by a constant depending only on the genus of $K$ and $\alpha$. \\[5pt] 
\textbf{Case (1):} Suppose that $r=1$. Now when $n=1$, we see that 
\[-y^d-(y-\alpha)=-y^m-(y-\alpha)=(c-\alpha)-(y-\alpha)=c-y=f(0)-y=g_\zeta\circ f(0)=z^p.\]
On the other hand, if $y-\alpha=0$, then $c-\alpha=-y^m=-y^d=-\alpha^d$ and so $\alpha$ is a fixed point of $f$, a contradiction. In particular, Proposition \ref{prop:FermatCatalan} applied to the equation $-(y-\alpha)=z^p+y^d$ implies that $dh(y)\leq B_1(K)h(y-\alpha)+B_2(K)$ for some $B_1(K)$ and $B_2(K)$ depending on $K$. Moreover, we note that $h(y)>0$, since otherwise $h(c-\alpha)=h(-y^d)=dh(y)=0$, and we again obtain a contradiction. Then $h(y)\geq1$ and so 
\[d\leq B_1(K)(1+h(\alpha))+B_2(K).\] 
Hence, $d$ is bounded by a constant depending only on $K$ and $\alpha$; moreover, $B_1$ is an absolute constant and $B_2$ depends on the genus of $K$. Likewise, when $n\geq2$ we see that  
\[f^{n-1}(0)^d+c-\zeta y=g_\zeta\circ f^n(0)=z^p.\] 
Therefore, if $c-\zeta y\neq 0$, then Proposition \ref{prop:FermatCatalan} and Lemma \ref{lem:orbit0} together imply that \vspace{.1cm}  
\begin{equation*}
\begin{split}
d h(c)\leq dh(f^{n-1}(0))&\leq B_1(K)h(c-\zeta y)+B_2(K)\\[5pt] 
&\leq B_1(K)\big(h(c)+h(y)\big)+B_2(K)\\[5pt] 
&\leq B_1(K)\big(h(c)+dh(y)\big)B_2(K)\\[5pt] 
&\leq B_1(K)\big(h(c)+h(c-\alpha)\big)+B_2(K)\\[5pt]
&\leq B_1(K)\big(2 h(c)+h(\alpha)\big)+B_2(K)
\end{split} 
\end{equation*}
In particular, $d\leq B_1(K)(2+h(\alpha))+B_2(K)$, so that $d$ is bounded by a constant depending on the genus of $K$ and $\alpha$. Finally, if $c-\zeta y=0$, then $c-\alpha=-y^d=-c^d$ since $\zeta^d=1$. Thus $\alpha=c^d+c$ so that 
\[h(\alpha)=h(c^d+c)\geq h(c^d)-h(c)= (d-1)h(c)\geq d-1,\] 
and $d$ is once again bounded by a constant depending on $K$ and $\alpha$. In particular, if we assume that every prime factor of $d$ is sufficiently large depending on $K$ and $\alpha$ (forcing $d$ to be large also), then $g_\zeta\circ f^n(x)$ is irreducible for all $n\geq1$ in this case.   
\\[5pt] 
\textbf{Case (2):} Suppose that $r>1$. Then, if we set $X=f^n(0)$, we have that $X^r-\zeta y=z^p$. Moreover, $h(\zeta y)>0$, since otherwise $h(c-\alpha)=h(-y^m)=0$, a contradicition. Thus, Proposition \ref{prop:FermatCatalan} and Lemma \ref{lem:orbit0} together imply that 
\begin{equation*}
\begin{split} 
r h(c)\leq rh(X)&\leq B_1(K)h(y)+B_2(K) \\[5pt] 
&\leq B_1(K)h(-y^m)+B_2(K) \\[5pt] 
&\leq B_1(K)(h(c)+h(\alpha))+B_2(K).
\end{split}
\end{equation*}
Hence, $r\leq B_1(K)(1+h(\alpha))+B_2(K)$ is bounded by a constant depending on the genus of $K$ and $\alpha$. In particular, if we assume that every prime factor of $d$ is sufficiently large depending on the genus of $K$ and $\alpha$ (forcing $r$ to be large also), then $g_\zeta\circ f^n(x)$ is irreducible for all $n\geq1$ again in this case.

Hence, we deduce that if $d$ is not divisible by $2$ or $3$ and if the prime factors of $d$ are sufficiently large depending on the genus of $K$ and $\alpha$, then the polynomials $g_\zeta\circ f^n$ are irreducible in $K[x]$ for all $n\geq1$. In particular, the number of irreducible factors of $f^n$ in $K[x]$ is equal to the number of irreducible factors of $f$ in $K[x]$; more specifically, all iterates of $f$ have exactly $m\leq d$ factors in $K[x]$.  
\end{proof}
\bibliographystyle{plain}
\bibliography{EventualStability}
\bigskip 
\bigskip

\end{document}